\documentclass{amsart}

\usepackage{amssymb,amsmath,amscd}

\usepackage[colorlinks,linktocpage]{hyperref}
\hypersetup{linkcolor=[rgb]{0,0,0.715}}
\hypersetup{citecolor=[rgb]{0,0.715,0}}

\newtheorem{thm}{Theorem}[section]
\newtheorem{cor}[thm]{Corollary}
\newtheorem{prop}[thm]{Proposition}
\newtheorem{lem}[thm]{Lemma}
\newtheorem{claim}[thm]{Claim}

\newtheorem{quest}[thm]{Question}

\newtheorem{mainthm}{Theorem}

\theoremstyle{definition}
\newtheorem{defn}[thm]{Definition}

\usepackage{comment}

\theoremstyle{remark}
\newtheorem{rem}[thm]{Remark}

\newcommand{\Ric}{\mathrm{Ric}}
\newcommand{\V}{\mathrm{vol}}

\makeatletter
\let\c@equation\c@thm
\makeatother
\numberwithin{equation}{section}

\bibliographystyle{plain}

\setcounter{tocdepth}{1}
\linespread{1.08}

\title[]{On manifolds with nonnegative Ricci curvature and the infimum of volume growth order $<2$}
\author{Zhu Ye}
\thanks{Supported partially  by National Natural Science Foundation of China [11821101] and [12271372].}
\address[Zhu Ye]{School of Mathematical Sciences, Capital Normal University, Beijing, China.}
\email{2210501006@cnu.edu.cn}
\date{}

\begin{document}
	\begin{abstract}
		we prove two rigidity theorems for open (complete and noncompact) $n$-manifolds $M$ with nonnegative Ricci curvature and the infimum of volume growth order  $<2$. The first theorem asserts that the Riemannian universal cover of $M$ has Euclidean volume growth if and only if $M$ is flat with an $n-1$ dimensional soul. The second theorem asserts that there exists a nonconstant linear growth harmonic function on $M$ if and only if $M$ is isometric to the metric product $\mathbb{R}\times N$ for some compact manifold $N$.
	\end{abstract}
	\maketitle

	\section{Introduction}
	Let $M$ be an open  $n$-manifold with nonnegative Ricci curvature. It is well known that $M$ has at least linear volume growth  (Yau \cite{yau2}) and at most Euclidean volume growth (Bishop volume comparison \cite{bishop}). That is, for some point $p\in M$, we have
\begin{equation*}
	C_1R\leq 	\V (B_R(p)) \leq \omega_nR^n ,\forall R\geq 1,
\end{equation*}
where $C_1>0$ is a constant that may rely on $p$ and $\omega_n$ is the volume of the unit ball in $\mathbb{R}^n$.  We say that $M$ has linear volume growth, if $\V (B_R(p))\leq C_2R $ for some constant $C_2>0$ and any $R\geq 1$. We say that $M$ has Euclidean volume growth or $M$ is noncollapsed, if $\V (B_R(p)) \geq CR^n$ for some constant $C>0$ and any $R\geq 1$. 

In this paper, we will generalize two rigidity theorems that have been established under the condition that $M$ has linear volume growth.

In \cite{Ye}, the author proved the following geometric rigidity:

\begin{thm}\cite{Ye} \label{georigidity}
	Let  $M$ be an open $n$-manifold with $\Ric\geq0$ and linear volume growth. Then the Riemannian universal cover of $M$ is noncollapsed if and only if $M$ is flat with an $n-1$ dimensional soul.
\end{thm}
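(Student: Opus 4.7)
The easy direction is standard: if $M$ is flat with an $(n-1)$-dimensional soul $S$, then $M$ is diffeomorphic to the total space of a flat line bundle over the compact flat manifold $S$, so $\tilde M$ is isometric to $\mathbb{R}^n$ and hence has Euclidean volume growth.

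For the converse, my plan is to combine equivariant Gromov-Hausdorff convergence with the Cheeger-Gromoll splitting theorem. Let $\Gamma := \pi_1(M)$ act as deck transformations on $\tilde M$ and fix a lift $\tilde p$ of $p$. First, $\Gamma$ must be infinite, for otherwise $\V(B_R(\tilde p))\le |\Gamma|\cdot \V(B_{R+D}(p))$ with $D=\mathrm{diam}(M)$ would grow only linearly in $R$, contradicting the Euclidean growth of $\tilde M$. Next, pick $R_i\to\infty$ and pass to a subsequence so that
\[
(R_i^{-1}\tilde M,\tilde p,\Gamma)\xrightarrow{\mathrm{eqGH}} (Y,y_\infty,G),\qquad (R_i^{-1}M,p)\to (Z,z_\infty).
\]
By Cheeger-Colding together with the noncollapsing of $\tilde M$, $Y$ is a metric cone $C(X)$ with $\dimH Y=n$; by work of Sormani on linear volume growth, $Z$ is isometric to either a ray $[0,\infty)$ or a line $\mathbb{R}$; and $Y/G$ is isometric to $Z$. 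In particular the $G$-orbits are $(n-1)$-dimensional.

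The heart of the argument is to upgrade this asymptotic splitting to an honest splitting of $\tilde M$. Using Colding-Naber regularity and the Lie group structure of $G$, I would extract $n-1$ asymptotically independent lines in $Y$ through $y_\infty$, each arising as the orbit of a one-parameter subgroup of $G$. Pulling back, this produces elements $\gamma_1,\dots,\gamma_{n-1}\in\Gamma$ whose iterates $\gamma_j^k\tilde p$ stay uniformly close to minimizing geodesics as $k\to\pm\infty$, yielding $n-1$ linearly independent lines in $\tilde M$ through $\tilde p$. Iterating the Cheeger-Gromoll splitting theorem gives $\tilde M\cong \mathbb{R}^{n-1}\times N$ with $\dim N=1$ and $\Ric_N\ge 0$, forcing $N=\mathbb{R}$ and $\tilde M=\mathbb{R}^n$. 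Therefore $M$ is flat, and the linear volume growth forces $\Gamma$ to preserve and act cocompactly on an $(n-1)$-dimensional affine subspace, producing the desired $(n-1)$-dimensional soul.

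The main obstacle is the promotion step. Although $G$-orbits in $Y$ have dimension $n-1$, recovering genuine lines in $\tilde M$ requires simultaneous control of the displacement functions $d(\tilde p,\gamma^k\tilde p)$ for several generators. Since $\Gamma$ is not a priori virtually abelian, a direct short-basis argument is unavailable; instead one must exploit stability of orbit structures under equivariant Gromov-Hausdorff convergence, combined with the volume noncollapsing of $\tilde M$, to certify that the chosen $\gamma_j$ produce asymptotically independent translational directions in $\tilde M$ and not merely in $Y$.
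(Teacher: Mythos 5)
The forward direction and the overall setup (equivariant Gromov--Hausdorff limits, $Y$ a metric cone by noncollapsing, $Y/G\cong Z$ a ray or line by Sormani) are fine, but the ``promotion step'' you flag is a genuine gap, and it cannot be closed in the way you sketch. Lines in an asymptotic cone of $\tilde M$ do not pull back to lines in $\tilde M$: an element $\gamma\in\Gamma$ whose orbit limits onto a line in $Y$ only gives $d(\tilde p,\gamma^k\tilde p)\sim ck$ asymptotically, and the orbit points $\gamma^k\tilde p$ need not lie near any single minimizing geodesic, so Cheeger--Gromoll cannot be applied to $\tilde M$ at all (think of a paraboloid: its asymptotic cone contains a ray/cone structure but the surface contains no line). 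Iterating the splitting theorem on $\tilde M$ is therefore the wrong tool here. There is also a secondary gap: ``$\dimH Y=n$ and $\dim(Y/G)=1$ imply the $G$-orbits are $(n-1)$-dimensional'' is not automatic and needs a quantitative argument.

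The paper's route avoids both problems by never leaving the limit space. From the fundamental-domain inequality $\#\{g:d(\tilde p,g\tilde p)\le 2R\}\ge \V(B_R(\tilde p))/\V(B_R(p))$, the Euclidean growth of $\tilde M$ and the linear growth of $M$ force the orbit growth order of $\Gamma$ to exceed $n-2$ along some scales; a slope/capacity argument (Lemma \ref{slopelemma} and Proposition \ref{upboxdim}) then produces one equivariant asymptotic cone $(Y,y,G)$ in which the orbit $Gy$ has lower box dimension $>n-2$. Writing the metric cone as $Y=\mathbb{R}^m\times C(X)$ with $C(X)$ lineless, $G$ must fix the vertex of $C(X)$, so $Gy\subset\mathbb{R}^m\times\{v\}$ and hence $m\ge n-1$; the codimension-two singular set theorem of Cheeger--Colding then gives $Y\cong\mathbb{R}^n$. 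Finally $\tilde M\cong\mathbb{R}^n$ follows not from splitting but from volume rigidity: the asymptotic cone being $\mathbb{R}^n$ forces the asymptotic volume ratio of the noncollapsed $\tilde M$ to equal $\omega_n$, and Bishop--Gromov monotonicity together with Colding's volume convergence yields the isometry. You should replace your line-extraction step with this cone-splitting-plus-volume-rigidity argument (or an equivalent one); as written, the central implication is unproved.
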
	
As	the first main result of this paper, we obtain an optimal volume growth condition on $M$ such that Theorem \ref{georigidity} still holds. 
  
	\begin{defn}
		We define                    
		\begin{align*}
	\mathrm{IV}(M)	&=\inf\{s>0 \mid \liminf\limits_{R\to\infty} \frac{\V(B_R(p))}{R^s}=0    \},\\
	\mathrm{SV}(M)	&=\sup\{s>0 \mid \limsup\limits_{R\to\infty} \frac{\V(B_R(p))}{R^s}> 0\}.
\end{align*}	
	We will refer to $\mathrm{IV}(M)$ (resp. $ \mathrm{SV}(M)$) as the infimum (resp. supremum) of volume growth order of $M$.
							\end{defn}
By definition, $\mathrm{IV}(M)<k$ if and only if there exists an $\alpha<k$ and a sequence $R_i\to\infty$ such that $\lim\limits_{i\to\infty} \frac{\V (B_{R_i}(p))}{R_i^\alpha}=0.$
	
Our first main result is:
\begin{mainthm} \label{noncollapserigidity}
Let $M$ be an open $n$-manifold with $\Ric\geq 0$ and $\mathrm{IV}(M)<2$. Then the Riemannian universal\ cover of $M$ is noncollapsed if and only if $M$ is flat with an $n-1$ dimensional soul. 
\end{mainthm}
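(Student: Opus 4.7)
The forward direction is immediate: if $M$ is flat with an $(n-1)$-dimensional soul, then $\tilde M\cong\mathbb{R}^n$ is Euclidean, hence noncollapsed. For the converse, my plan is to pass to an equivariant asymptotic cone along a sequence $R_i\to\infty$ witnessing $\mathrm{IV}(M)<2$, identify this cone as $\mathbb{R}^n$, upgrade this to $\tilde M\cong\mathbb{R}^n$ via the equality case of Bishop-Gromov, and finally extract the soul dimension from the volume hypothesis.

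Fix a lift $\tilde p$ of $p$ and let $\Gamma=\pi_1(M)$. Writing $N_R:=|\{\gamma\in\Gamma:d(\tilde p,\gamma\tilde p)\le R\}|$, a standard counting argument for the covering $\pi\colon\tilde M\to M$ yields the volume comparison
\[
\V(B_R(\tilde p))\le N_{2R}\cdot\V(B_R(p));
\]
combined with the Euclidean lower bound $\V(B_R(\tilde p))\ge cR^n$ from noncollapsedness, this gives $N_R\ge c'R^n/\V(B_R(p))$. Choosing $\alpha<2$ and $R_i\to\infty$ with $\V(B_{R_i}(p))/R_i^\alpha\to 0$, one obtains $N_{R_i}/R_i^{n-\alpha}\to\infty$, so the $\Gamma$-orbit of $\tilde p$ has polynomial growth of degree strictly greater than $n-2$. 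Passing to an equivariant pointed Gromov-Hausdorff limit of $(\tilde M,\tilde p,R_i^{-2}g,\Gamma)$ along a subsequence produces a noncollapsed Ricci-limit space $(Y,y_0)$, which is a metric cone of dimension $n$ by Cheeger-Colding, together with a closed Lie subgroup $G\le\mathrm{Isom}(Y)$ (Colding-Naber, Kapovitch-Wilking). The quotient $Y/G$ is the corresponding asymptotic cone of $M$; the Bishop-Gromov comparison propagates $\V(B_{RR_i}(p))/(RR_i)^\alpha\to 0$ to every fixed $R>0$, forcing $Y/G$ to be at most one-dimensional, and noncompactness of $M$ narrows this further to $Y/G\cong\mathbb{R}$ or $[0,\infty)$.

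When $Y/G\cong\mathbb{R}$, the projection $Y\to Y/G$ together with the $G$-action produces a line in $Y$, and the Cheeger-Colding splitting theorem gives an isometric decomposition $Y=\mathbb{R}\times Y_1$. The $(n-1)$-dimensional $G$-orbits through $y_0$ become the slices $\{t\}\times Y_1$, so $Y_1$ is simultaneously a homogeneous noncollapsed Ricci-limit space of dimension $n-1$ and a metric cone; homogeneity forbids a distinguished cone vertex, which forces $Y_1=\mathbb{R}^{n-1}$ and hence $Y=\mathbb{R}^n$. By the monotonicity of $\V(B_R(\tilde p))/R^n$, this ratio attains its Euclidean value $\omega_n$ along $R_i$ and is therefore identically $\omega_n$, so the equality case of Bishop-Gromov yields $\tilde M\cong\mathbb{R}^n$. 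Then $\Gamma\le\mathrm{Isom}(\mathbb{R}^n)$ is a Bieberbach group with pure-translation subgroup $\mathbb{Z}^k$ of finite index; the flat quotient $\mathbb{R}^n/\Gamma$ has volume growth of exact order $n-k$, and $\mathrm{IV}(M)<2$ together with $k<n$ (from noncompactness) pins down $k=n-1$, giving the claimed $(n-1)$-dimensional flat soul.

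The most delicate step is the case $Y/G\cong[0,\infty)$, corresponding to the rescaled limit of $M$ having a single asymptotic end. Here $Y=C(Z)$ with $G$ acting transitively on the $(n-1)$-dimensional link $Z$, and no line in $Y$ is available for free from the quotient. I expect this case to be handled either by shifting the base point $p_i\in M$ out along the end (reducing to a double-ended rescaled limit where the splitting argument applies) or by exploiting the strict orbit-growth lower bound $N_R\ge c'R^{n-\alpha}$ to promote the transitive $G$-action on $Z$ to an isometric $\mathbb{R}^{n-1}$-action on $Y$, which combined with the cone rigidity of Cheeger-Colding forces $Z=S^{n-1}$. This rigidity input is the technical heart of extending Theorem~\ref{georigidity} beyond linear volume growth.
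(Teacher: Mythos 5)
Your forward direction and the covering-space counting inequality $\V(B_R(\tilde p))\le \#\Gamma(2R)\cdot \V(B_R(p))$, giving orbit growth of order $>n-2$ along the scales $R_i$, match the paper's starting point. After that the argument diverges and has several genuine gaps.

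First, the step ``Bishop--Gromov propagates $\V(B_{RR_i}(p))/(RR_i)^\alpha\to0$ to every fixed $R$, forcing $Y/G$ to be at most one-dimensional'' does not work. Bishop--Gromov gives $\V(B_{rR_i}(p))\ge r^n\V(B_{R_i}(p))$ for $r<1$, i.e.\ only lower bounds at smaller scales; it gives no upper bound on $\V(B_{rR_i}(p))/\V(B_{R_i}(p))$, which is what controls the renormalized limit measure of $B_r(o)$ in the collapsed quotient limit. Volume growth of order $<2$ along a sparse sequence of scales does not, by itself, bound the Hausdorff dimension of the corresponding asymptotic cone, and even if it did, identifying a ``one-dimensional'' Ricci limit space with $\mathbb{R}$ or $[0,\infty)$ is an additional nontrivial input. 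The paper never needs any statement about $Y/G$ in this proof. Second, even granting $Y=\mathbb{R}\times Y_1$, your claim that the $G$-orbits are the \emph{full} slices $\{t\}\times Y_1$ (hence that $Y_1$ is homogeneous) is unsupported: all one can say is $Gy_0\subseteq\{0\}\times Y_1$. Third, you explicitly leave the case $Y/G\cong[0,\infty)$ unresolved (``I expect this case to be handled either by\ldots''), so the proof is incomplete on its face. Fourth, there is a quieter but essential issue: knowing $\#\Gamma(R_i)\ge R_i^{n-\alpha}$ at isolated scales says nothing about the orbit of $G$ in the blow-down limit unless one also controls the ratios $\#\Gamma(R_i)/\#\Gamma(\epsilon R_i)$ for all $\epsilon\in(0,1)$ simultaneously; this is exactly what the paper's slope lemma (Lemma \ref{slopelemma}) is for, and it is absent from your argument.

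For comparison, the paper's route (Proposition \ref{upboxdim} plus the proof of Theorem \ref{noncollapsed+Rkinfinity} with $k=0$) avoids the quotient $Y/G$ entirely: the slope lemma selects scales at which the orbit-counting ratios are controlled, yielding an equivariant asymptotic cone $(Y,y,G)$ with $\dim_{lb}(Gy)>n-2$; since $\tilde M$ is noncollapsed, $Y=\mathbb{R}^m\times C(X)$ is a metric cone with $C(X)$ lineless, and $Gy\subseteq\mathbb{R}^m\times\{v\}$ forces $m\ge n-1$; the codimension-two bound on the singular set then gives $Y\cong\mathbb{R}^n$, Colding's volume convergence gives $\tilde M\cong\mathbb{R}^n$, and the soul dimension follows from the orbit growth bound. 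Your closing Bieberbach argument for the soul is fine once $\tilde M\cong\mathbb{R}^n$ is established, but the core of the converse is not established by your proposal.
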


Theorem \ref{noncollapserigidity} is optimal in the sense that $\mathrm{IV}(M)=2$ does not imply flatness. Indeed, we may consider the metric product $M_0=N^2\times F^{n-2}$, where $N$ is a $2$-manifold  with Euclidean volume growth and positive sectional curvature, and $F$ is a compact flat manifold. Then $\mathrm{IV}(M_0)=2$, the universal cover of $M_0$ has Euclidean volume growth, but $M_0$ is not flat. 
 
Assume that $M$ has noncollapsed universal cover. By Theorem \ref{noncollapserigidity}, $\mathrm{IV}(M)<2$ implies that $\mathrm{IV}(M)=\mathrm{SV}(M)=1$. This  motivates the author to propose the following question:
	
\begin{quest} \label{stablevol}
Let $M$ be an open manifold with $\Ric\geq 0$ and noncollapsed universal cover. Is it true that $\mathrm{IV}(M)=\mathrm{SV}(M)\in \mathbb{N}_+$? 
\end{quest}

	Let's illustrate the idea to prove Theorem \ref{noncollapserigidity}. Let $\pi:(\tilde{M},\tilde{p})\rightarrow (M,p)$ be the Riemannian universal cover with deck transformation group $\Gamma$. The volume growth conditions  on $M$ and $\tilde{M}$ guarantee that the order of the orbit growth of $\Gamma$ is strictly larger than  $n-2$  in some scales. This enables us to prove that for some equivariant asymptotic cone (\cite{eGH1,eGH2}) of $(\tilde{M},\tilde{p},\Gamma)$, say $(Y,y,G)$, the orbit $Gy$ has lower Box dimension $\mathrm{dim}_{lb}(Gy)>n-2$ (Proposition \ref{upboxdim}).  Since $\tilde{M}$ has Euclidean volume growth, $Y$ is a metric cone by Cheeger-Colding \cite{almostrigidity}. By Cheeger-Colding splitting theorem, $Y\cong \mathbb{R}^k\times C(Z)$ with $\text{diam}(Z)<\pi$. Now $\mathrm{dim}_{ub}(Gy)>n-2$ forces $Y\cong\mathbb{R}^n$, and thus $\tilde{M}\cong \mathbb{R}^n$ as a corollary of Colding's volume convergence \cite{Colding}. This proves Theorem \ref{noncollapserigidity}. A generalized version of Theorem \ref{noncollapserigidity} is given at the end of the introduction, and we will prove it in Section \ref{proofofA}.  
	
	Our second main result involves linear growth harmonic functions on $M$. Recall that to say a harmonic function $f$ on $M$ has polynomial growth means  $|f|\leq C(d(p,\cdot)+1)^k$ for some $C,k>0$ and $p\in M$. In the particular case $k=1$, we say $f$ has linear growth.
	
	Sormani considered the harmonic functions on  manifolds with linear volume growth, and proved the following theorem: 
	\begin{thm}\cite{Sormaniharmonic} \label{polyharmonicrigidity}Let $M$ be an open $n$-manifold with $\mathrm{Ric}\geq 0$ and linear volume growth.
	If there exists a nonconstant polynomial growth harmonic function on $M$, then  $M$ splits isometrically as $M\cong\mathbb{R}\times N^{n-1}$. 
\end{thm}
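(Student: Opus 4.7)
The plan is to combine the parabolicity of $M$ (a direct consequence of linear volume growth) with Bochner's formula to handle the linear growth case directly, and to reduce the general polynomial growth case to it via a blow-down argument. The converse direction is trivial, since on $\mathbb{R}\times N$ the projection onto $\mathbb{R}$ is a nonconstant linear growth harmonic function, so I focus on the forward implication.

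The key preliminary input is parabolicity. Since $\V(B_R(p))\leq C_2 R$ and $\Ric\geq 0$, Grigor'yan's volume test $\int_1^\infty R/\V(B_R(p))\,dR=\infty$ implies that every bounded subharmonic function on $M$ is constant. Assume first that $f$ has linear growth. Yau's gradient estimate gives $\sup_M|\nabla f|<\infty$, and Bochner's formula
\begin{equation*}
\Delta |\nabla f|^2 = 2|\mathrm{Hess}(f)|^2 + 2\Ric(\nabla f,\nabla f)\geq 0
\end{equation*}
shows $|\nabla f|^2$ is bounded and subharmonic, hence constant. Therefore $\mathrm{Hess}(f)\equiv 0$, making $\nabla f$ a parallel nonzero vector field. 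Along the unit-speed flow line $\gamma$ of $\nabla f/|\nabla f|$, $f$ is affine in the parameter; combined with the Lipschitz bound $|f(x)-f(y)|\leq |\nabla f|\cdot d(x,y)$, this forces $\gamma$ to be distance-minimizing on all of $\mathbb{R}$, i.e., a line in $M$. Cheeger--Gromoll splitting then yields $M\cong \mathbb{R}\times N$, with $N$ compact since $M$ has linear volume growth.

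For a nonconstant polynomial growth $f$ of degree $k\geq 2$, I would argue by blow-down. Suitably normalized rescalings of $f$ on the pointed rescaled manifolds $(M, R_i^{-2}g, p)$ subconverge, via Cheeger--Colding compactness for harmonic functions on Ricci limit spaces, to a nonconstant polynomial growth harmonic function $f_\infty$ on some asymptotic cone $(Y,y)$ of $M$. Linear volume growth with $\Ric\geq 0$ constrains $Y$, via Sormani's analysis, to be essentially one-dimensional --- a ray or a line --- which drastically restricts the polynomial growth harmonic functions it admits and forces $f_\infty$ to be linear with $Y$ containing a line. Lifting this structure back to $M$, using Sormani's proper Busemann function machinery available under linear volume growth, produces a line in $M$, and Cheeger--Gromoll splitting then concludes.

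The main obstacle I anticipate is the blow-down step: ensuring the nondegeneracy of $f_\infty$ by a well-chosen normalization, and rigorously transferring the line from the possibly collapsed asymptotic cone back to $M$. This lifting is the technical crux of the argument, precisely where Sormani's earlier results on proper Busemann functions under linear volume growth should be essential and where the delicate Cheeger--Colding theory for collapsed Ricci limits must be invoked with care.
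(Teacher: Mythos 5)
This statement is quoted from Sormani's paper \cite{Sormaniharmonic}; the present paper gives no proof of it, so your proposal has to stand on its own. Your treatment of the linear growth case is correct and complete: linear volume growth gives parabolicity via the volume test $\int^\infty R\,\V(B_R(p))^{-1}\,dR=\infty$, the Cheng--Yau gradient estimate bounds $|\nabla f|$, Bochner makes $|\nabla f|^2$ a bounded subharmonic function, hence constant, so $\mathrm{Hess}(f)\equiv 0$ and the integral curves of the parallel field $\nabla f$ are lines, whence Cheeger--Gromoll splits $M$ (and $N$ is forced compact by the volume growth). That part needs no repair.

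The genuine gap is the degree $k\geq 2$ case, which you only sketch and which, as sketched, does not close. The blow-down limits of a non-splitting manifold with linear volume growth are one-dimensional but need not be lines: by Sormani's structure theory the generic picture is a \emph{ray} (the Busemann level sets have sublinearly growing diameter), and a ray contains no line, so the step ``$f_\infty$ is linear and $Y$ contains a line, which lifts to a line in $M$'' is unjustified at exactly the point where the theorem could fail. Moreover, a line in a collapsed asymptotic cone does not lift to a line in $M$ without a separate argument (compare Proposition \ref{uniqueR} in this paper, which requires real work even for the unique-cone case), and harmonicity in the limit must be interpreted with respect to the renormalized limit measure on a ray, where nonconstant ``harmonic'' functions can exist. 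The normalization/nondegeneracy of $f_\infty$ is also left open. Sormani's actual argument runs differently: assuming $M$ does not split, she uses the properness of the Busemann function and the sublinear diameter growth of its level sets, together with the gradient estimate applied on the thin ends, to show that \emph{every} polynomial growth harmonic function has decaying oscillation along the end and is therefore constant; the splitting is then the only alternative. To make your proposal a proof you would either need to carry out that oscillation-decay argument or supply a genuinely different mechanism that rules out the ray-type asymptotic geometry in the presence of a degree $\geq 2$ harmonic function; neither is present.
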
	
	
	We partially generalize Theorem \ref{polyharmonicrigidity} to the following:  
	\begin{mainthm}\label{harmonicrigidity}
	Let $M$ be an open $n$-manifold such that $\Ric\geq 0$ and $\mathrm{IV}(M)<2$. Then there exists a nonconstant linear growth harmonic function on $M$ if and only if   $M$ splits isometrically as $M\cong\mathbb{R}\times N^{n-1}$ for some compact manifold $N$.   
	\end{mainthm}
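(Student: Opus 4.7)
The ``if'' direction is immediate, since projection $\mathbb{R}\times N\to\mathbb{R}$ is a harmonic function of linear growth. For the ``only if'' direction, suppose $f:M\to\mathbb{R}$ is a nonconstant linear-growth harmonic function. My plan is to prove that $\mathrm{Hess}\,f\equiv 0$ on all of $M$, from which the splitting $M\cong \mathbb{R}\times N$ follows by de Rham, and then to rule out noncompact $N$ using the volume-growth hypothesis. The reason this approach should work is a scale match: an integrated Bochner inequality bounds $\int_{B_R(p)}|\mathrm{Hess}\,f|^2$ by a constant multiple of $\V(B_{2R}(p))/R^2$, and the very definition of $\mathrm{IV}(M)<2$ gives a sequence of radii $R_i\to\infty$ along which exactly this ratio tends to $0$.

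To carry this out, I first establish a uniform gradient bound $|\nabla f|\leq L$ on $M$. This follows from the Cheng-Yau gradient estimate applied, at an arbitrary $q\in M$, to the shifted positive harmonic function $u_q:=f-\inf_{B_{2R}(q)}f+1$: one gets $|\nabla f|(q)=|\nabla u_q|(q)\leq (C/R)\,u_q(q)$, and the linear growth of $f$ controls $u_q(q)$ by $C'R$ with $C'$ independent of $q$, yielding an $R$-independent bound. Next I run the standard integrated Bochner argument: from $\Delta f=0$ and $\Ric\geq 0$, the Bochner formula gives $\tfrac12\Delta|\nabla f|^2\geq |\mathrm{Hess}\,f|^2$. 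Multiplying by $\phi^2$ for a Lipschitz cutoff $\phi$ equal to $1$ on $B_R(p)$, vanishing outside $B_{2R}(p)$, with $|\nabla\phi|\leq 2/R$, then integrating by parts and using Cauchy-Schwarz together with $|\nabla f|\leq L$, one obtains
\begin{equation*}
\int_{B_R(p)}|\mathrm{Hess}\,f|^2\ \leq\ C_n L^2\,\frac{\V(B_{2R}(p))}{R^2}.
\end{equation*}

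Finally, invoke $\mathrm{IV}(M)<2$ to pick $\alpha<2$ and $R_i\to\infty$ with $\V(B_{R_i}(p))/R_i^\alpha\to 0$. Bishop-Gromov gives $\V(B_{2R_i}(p))\leq 2^n\V(B_{R_i}(p))=o(R_i^\alpha)$, so inserting $R=R_i$ in the estimate above yields $\int_{B_{R_i}(p)}|\mathrm{Hess}\,f|^2=o(R_i^{\alpha-2})\to 0$, forcing $\mathrm{Hess}\,f\equiv 0$ on all of $M$. Then $\nabla f$ is a parallel, nowhere-vanishing vector field, its flow is by isometries, and the de Rham decomposition produces an isometric splitting $M\cong \mathbb{R}\times N$ in which $f$ is an affine function of the $\mathbb{R}$-coordinate. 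If $N$ were noncompact, then $N$ would be a complete, noncompact manifold with $\Ric_N\geq 0$ inherited from $M$, so by Yau it would have at-least-linear volume growth; this would force $\V(B_R(p))\gtrsim R^2$ in $M$, contradicting $\mathrm{IV}(M)<2$. Hence $N$ is compact. The step I expect to demand the most care is the bookkeeping for the uniform gradient estimate and the cutoff computation, because getting the sharp $R^{-2}\V(B_{2R})$ scaling on the right-hand side of the Bochner bound is exactly what allows the hypothesis $\mathrm{IV}(M)<2$ to kill the Hessian; once that estimate is in place, everything else is essentially forced.
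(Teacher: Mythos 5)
Your proof is correct, but it takes a genuinely different route from the paper. The paper deduces Theorem~\ref{harmonicrigidity} from Theorem~\ref{ccm} (Cheeger--Colding--Minicozzi: a nonconstant linear growth harmonic function forces $M$ to be $1$-Euclidean at infinity), then feeds this into the volume-growth gap of Theorem~\ref{mainthm} to rule out the type~II alternative (which would give $\mathrm{IV}(M)\geq 2$), and finally applies Proposition~\ref{uniqueR} to convert ``unique asymptotic cone $\mathbb{R}$'' into the splitting $M\cong\mathbb{R}\times N$ with $N$ compact. You instead run the classical Cheng--Yau plus integrated Bochner argument directly on $M$: the uniform gradient bound is standard (for fixed $q$ the term $d(p,q)/R$ in your estimate washes out as $R\to\infty$, so the bound is indeed $q$-independent), the cutoff computation does give $\int_{B_R}|\mathrm{Hess}\,f|^2\leq C_nL^2\V(B_{2R})/R^2$, and the crucial observation that $\mathrm{IV}(M)<2$ only supplies a \emph{subsequence} $R_i$ with $\V(B_{R_i})/R_i^{\alpha}\to0$ is exactly compensated by the fact that $\int_{B_R}|\mathrm{Hess}\,f|^2$ is monotone in $R$, so vanishing along a subsequence kills the Hessian globally; the splitting from a nonvanishing parallel gradient and the exclusion of noncompact $N$ via Yau's linear volume growth lower bound are both sound. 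What each approach buys: yours is elementary and self-contained (essentially the Li--Tam-style argument the paper alludes to, pushed through under the weaker $\liminf$-type hypothesis via Bishop--Gromov doubling), but it is tied to \emph{linear} growth, where the bounded-gradient estimate is available; the paper's route is heavier but factors through Theorem~\ref{mainthm}, which is of independent interest, applies for all $k$, and is the natural framework for attacking the polynomial-growth question (Question~1.6) that the direct Bochner argument does not reach.
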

		When  $M$ has slow volume growth, Theorem \ref{harmonicrigidity} follows from Section 2 of Li-Tam \cite{LiT}.
		
	Compared with Theorem \ref{polyharmonicrigidity}, we propose the following question:
	\begin{quest}
		Let $M$ be an open $n$-manifold with $\mathrm{Ric}\geq0$ and  $\mathrm{IV}(M)<2$.  If there exists a nonconstant polynomial growth harmonic function on $M$,  is $M$ necessarily isometric to $\mathbb{R}\times N$ for some compact manifold $N$?
	\end{quest}

Let $M$ be complete with $\Ric\geq0$. Given an integer $0\leq k\leq n$, we say that $M$ is $k$-Euclidean at infinity, if any asymptotic cone of $M$ split off an $\mathbb{R}^k$ factor. Note that  $M$ is always $0$-Euclidean at infinity. We
 recall the following result of Cheeger-Colding-Minicozzi:

\begin{thm}\cite{ccm} \label{ccm} Let $M$ be a complete manifold with $\Ric\geq0$. If the space of linear growth harmonic functions on $M$ has dimension $k+1$,  then $M$ is $k$-Euclidean at infinity.
\end{thm}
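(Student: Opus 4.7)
The plan is to lift the $k$ independent nonconstant linear growth harmonic functions on $M$ to harmonic functions with parallel gradient on an arbitrary asymptotic cone $Y$ of $M$, and then extract $\mathbb{R}^k$ as a factor by iterating the Cheeger-Colding splitting theorem on $Y$.

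First, I would fix a basis $1, u_1, \ldots, u_k$ of the space of linear growth harmonic functions, normalized so that $u_i(p) = 0$. The Cheng-Yau gradient estimate gives $|\nabla u_i| \leq L_i$, and Bochner's formula
\begin{equation*}
\tfrac12\Delta|\nabla u_i|^2 = |\mathrm{Hess}(u_i)|^2 + \Ric(\nabla u_i,\nabla u_i) \geq |\mathrm{Hess}(u_i)|^2
\end{equation*}
makes $|\nabla u_i|^2$ bounded and subharmonic. Pairing this with a cutoff $\phi\equiv1$ on $B_R(p)$ supported in $B_{2R}(p)$ and invoking Bishop-Gromov yields the averaged Hessian decay
\begin{equation*}
\frac{1}{\V(B_R(p))}\int_{B_R(p)}|\mathrm{Hess}(u_i)|^2\,\mathrm{d}V \leq \frac{CL_i^2}{R^2},
\end{equation*}
which is the key integral input.

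Second, I would fix any asymptotic cone $(Y,y) = \lim_j (M,p,r_j^{-1}d)$. The rescaled functions $u_i^{(j)} := r_j^{-1}u_i$ remain $L_i$-Lipschitz and vanish at the basepoint, so Arzel\`a--Ascoli together with the Cheeger-Colding convergence theory for harmonic functions on GH-limits of $\Ric\geq 0$ manifolds yields subsequential locally uniform limits $v_1,\ldots,v_k$ which are harmonic on $Y$ in the variational sense. The averaged Hessian decay, combined with Cheeger-Colding's analysis of the limit and their infinitesimal Bochner argument, forces $|\nabla v_i|$ to be constant on $Y$, so each $v_i$ has parallel gradient. The $v_i$ remain linearly independent: any linear combination with vanishing limit would correspond to a harmonic function $w := \sum a_i u_i$ with $\sup_{B_r(p)}|w| = o(r)$, and the Cheng-Yau gradient estimate applied at every point then forces $w$ to be constant, contradicting independence of the $u_i$ modulo constants. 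Iterating the Cheeger-Colding splitting theorem on $Y$ using the $k$ linearly independent parallel gradients then produces an isometric splitting $Y \cong \mathbb{R}^k \times Z$, which is exactly the meaning of ``$M$ is $k$-Euclidean at infinity''.

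The hard part will be the second step: rigorously upgrading the integrated Hessian decay on $M$ to a parallel gradient statement on the possibly singular cone $Y$, and executing the iterated $\mathbb{R}$-splittings on $Y$. This requires the full Cheeger-Colding machinery for harmonic analysis on Gromov-Hausdorff limits of manifolds with $\Ric\geq 0$ — in particular, their results on convergence of harmonic functions under measured GH convergence, an integrated Bochner identity valid on such limit spaces, and the generalized splitting theorem — which is the technical core of \cite{ccm}.
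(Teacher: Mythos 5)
The paper offers no proof of this statement: Theorem~\ref{ccm} is imported verbatim from Cheeger--Colding--Minicozzi \cite{ccm}, so there is no internal argument to compare against. Your outline is essentially the argument of that source --- global Cheng--Yau gradient bound, integrated Bochner/Hessian decay, blow-down to parallel-gradient harmonic limits on the asymptotic cone, nondegeneracy of the limits via the gradient estimate (using that the $v_i$ vanish at the basepoint, so independence of the functions gives independence of their parallel gradients), and iterated splitting --- and it is a correct roadmap, with the caveat you yourself flag: each hard step (convergence of harmonic functions under measured Gromov--Hausdorff limits, the rigidity forcing $|\nabla v_i|$ constant on the possibly singular cone, and the maximal $\mathbb{R}^k$-splitting) is exactly the technical content of \cite{ccm} and is invoked rather than proved.
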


Theorem \ref{harmonicrigidity} is a by-product of Theorem \ref{ccm} and our research on the relation between the volume growth and the asymptotic cones (with renormalized limit measure) of $M$. 

We will prove the following volume growth gap result:
\begin{mainthm}\label{mainthm}
	Let $M$ be an open manifold with $\Ric\geq 0$. If $M^n$ is $k$-Euclidean at infinity, then 
	$M$ is of one of the following two types:
	
	type I: $M$ has unique asymptotic cone $\mathbb{R}^k$.
	
	type II: every asymptotic cone of $M$ splits as $\mathbb{R}^k\times Z$ with $Z$ noncompact.
	
	If $M$ is of type I, then $\mathrm{IV}(M)=\mathrm{SV}(M)=k$. That is, 
	\begin{equation}\label{voleq1}
		\lim\limits_{R\to\infty}\frac{\V(B_R(p))}{R^{k-\alpha}}=\infty \text{ and }\,	\lim\limits_{R\to\infty}\frac{\V(B_R(p))}{R^{k+\alpha}}=0, \forall\alpha>0.
	\end{equation} 
	
	If  $M$ is of type II, then $\mathrm{IV}(M)\geq k+1$. That is,
	\begin{equation} \label{voleq2}
		\lim\limits_{R\to\infty}\frac{\V(B_R(p))}{R^{k+1-\alpha}}=\infty, \forall\alpha>0.
	\end{equation}	
\end{mainthm}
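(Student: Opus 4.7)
The plan is to study the asymptotic cones of $M$ through Cheeger-Colding's renormalized limit measures. By Gromov's precompactness, every sequence $r_i\to\infty$ admits a subsequence along which $(M,p,r_i^{-1}d)$ pGH-converges to some $(Y_\infty,y_\infty)$ carrying a normalized Radon limit measure $\mu_\infty$ satisfying $\mu_\infty(B_1(y_\infty))=1$ and $\mu_\infty(B_R(y_\infty))=\lim_i\V(B_{Rr_i}(p))/\V(B_{r_i}(p))$ at continuity radii. The $k$-Euclidean hypothesis gives $Y_\infty\cong\mathbb{R}^k\times Z_\infty$, and the Cheeger-Colding splitting theorem applied at the level of measures yields $\mu_\infty=\mathcal{L}^k\otimes\mu_{Z_\infty}$ and the product formula
\begin{equation*}
\mu_\infty(B_R(y_\infty))=\int_{Z_\infty}\omega_k\bigl(R^2-d_{Z_\infty}(z,z_0)^2\bigr)_+^{k/2}\,d\mu_{Z_\infty}(z).
\end{equation*}

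For the dichotomy and Type I, I would show that if some cone $Y_\infty=\mathbb{R}^k\times Z_\infty$ has $Z_\infty$ of bounded diameter, then every cone is forced to be $\mathbb{R}^k$. The product formula gives $\mu_\infty(B_R(y_\infty))\sim CR^k$ as $R\to\infty$, so $\V(B_{Rr_i}(p))/\V(B_{r_i}(p))\to CR^k$ for large $R$. Rescaling $Y_\infty$ by $\lambda\to\infty$ collapses the bounded $Z_\infty$ to a point; a diagonal subsequence $s_j\to\infty$ then promotes $\mathbb{R}^k$ to an actual asymptotic cone of $M$ with $\V(B_{Rs_j}(p))/\V(B_{s_j}(p))\to R^k$ for every $R>0$, equivalently $\phi(Rs_j)/\phi(s_j)\to 1$ where $\phi(R):=\V(B_R(p))/R^k$. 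Choosing the $s_j$ sufficiently dense in the logarithmic scale (achievable by the diagonal argument), every $r\to\infty$ can be sandwiched between consecutive $s_j$'s with bounded ratio, forcing $\phi(r)\to L$ for a single positive constant $L$. Once $\V(B_R(p))/R^k\to L$, every renormalized cone measure is Lebesgue on $\mathbb{R}^k$, which yields uniqueness and the growth asymptotics (\ref{voleq1}).

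For Type II, assume every cone has $Z_\infty$ noncompact and suppose for contradiction that $\mathrm{IV}(M)<k+1$: fix $s<k+1$ and $R_j\to\infty$ with $\V(B_{R_j}(p))/R_j^s\to 0$. Taking a cone along $R_j$ yields $Y_\infty=\mathbb{R}^k\times Z_\infty$ with $Z_\infty$ noncompact. The factor measure $\mu_{Z_\infty}$ should inherit a Yau-type linear lower bound $\mu_{Z_\infty}(B_R^{Z_\infty}(z_0))\geq cR$ for large $R$, arising from the noncompactness of $Z_\infty$ combined with the Bishop-Gromov inequality for $\mu_\infty$ via the product decomposition. Plugged into the product formula, this gives $\mu_\infty(B_R(y_\infty))\geq c'R^{k+1}$ for large $R$, whence $\V(B_{RR_j}(p))\geq c''R^{k+1}\V(B_{R_j}(p))$ for large $R$ and $j$. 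A diagonal choice $R_j'=R_j^{1+\beta}$ with small $\beta>0$, combined with Bishop-Gromov to interpolate the volume at intermediate scales, produces new scales at which $\V(B_R(p))\geq c''' R^{k+1-\epsilon}$, eventually contradicting $\V(B_{R_j}(p))/R_j^s\to 0$ with $s<k+1$.

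The main obstacle I foresee is establishing the Yau-type linear lower bound $\mu_{Z_\infty}(B_R^{Z_\infty}(z_0))\geq cR$ on the factor measure of the noncompact piece $Z_\infty$: since $Z_\infty$ is only a metric-measure space arising as a factor of a Cheeger-Colding limit and not a manifold, one must transfer its noncompactness into quantitative linear measure growth using the Ricci-limit structure, likely via a Busemann-function argument on the noncompact direction of $Z_\infty$ together with the splitting of $Y_\infty$. A secondary technical challenge is the diagonal contradiction in Type II, which requires carefully propagating the cone-level $R^{k+1}$-growth back to $M$ at intermediate scales, since the $\liminf$-zero hypothesis constrains $\V$ only along the special sequence $R_j$.
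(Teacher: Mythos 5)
Your overall architecture (renormalized limit measures, product splitting of the measure, a Yau-type linear bound on the noncompact factor) matches the paper's, but both halves of the argument have genuine gaps. For the dichotomy and Type I, the step ``choosing the $s_j$ sufficiently dense in the logarithmic scale \dots every $r\to\infty$ can be sandwiched between consecutive $s_j$'s with bounded ratio'' is not achievable by the diagonal argument: diagonalizing over $\lambda\to\infty$ and $i\to\infty$ only produces \emph{some} sequence of good scales $s_j=\lambda_j r_{i_j}$, with no control whatsoever on the gaps $s_{j+1}/s_j$, and Bishop--Gromov only pins down $\phi(r)=\V(B_r(p))/r^k$ over bounded multiplicative ranges around each $s_j$. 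So $\phi(r)\to L$ does not follow, and a priori $M$ could oscillate between $\mathbb{R}^k$-like scales and $\mathbb{R}^k\times Z$-like scales; volume-growth information alone cannot rule this out, since the min and max of $v(B_l(o))$ over the set of cones need not coincide. The paper closes exactly this gap with a critical rescaling argument: assuming both $\mathbb{R}^k$ and some $\mathbb{R}^k\times Z$ with $Z$ noncompact occur as asymptotic cones, it takes $h_i=\inf\{t\in[1,l_i]: t N_i \text{ is } \tfrac{\epsilon}{2}\text{-close to }\mathbb{R}^k\}$, shows $h_i\to\infty$ so the limit at the critical scale is $\mathbb{R}^k\times K$ with $K$ compact and nontrivial, and then contradicts the minimality of $h_i$ by rescaling by a factor $c<1$. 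That mechanism, not density of good scales, is what forces the dichotomy; your proposal is missing this idea.

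For Type II there are two issues. The obstacle you correctly identify -- a linear lower bound for $\mu_{Z_\infty}$ -- is resolved in the paper by invoking Gigli's splitting theorem for $\mathrm{RCD}(0,n)$ spaces, which guarantees that the factor $(Z,z,\mathfrak{m}_Z)$ is a $\mathrm{CD}(0,n-k)$ space; Bishop--Gromov then applies on $Z$ itself and the classical ray argument gives $\mathfrak{m}_Z(B_{2R}(z))\geq c_nR$. More seriously, your contradiction scheme does not close: extracting a cone only along the bad sequence $R_j$ yields $\V(B_{RR_j}(p))\geq c''R^{k+1}\V(B_{R_j}(p))$ only for $j\geq J(R)$ with $J(R)$ uncontrolled, so the range of $R$ for which the bound holds at scale $R_j$ may grow arbitrarily slowly in $j$, and $\V(B_{R_j}(p))/R_j^s\to 0$ is never contradicted; the substitution $R_j'=R_j^{1+\beta}$ plus Bishop--Gromov interpolation cannot bridge these gaps. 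The correct route (the paper's) is to use that \emph{every} asymptotic cone is of type II, so that $\liminf_{R\to\infty}\V(B_{lR}(p))/\V(B_R(p))\geq c\,l^{k+1}$ for each fixed large $l$, and then to iterate this ratio bound along the geometric sequence $l^m$ (Lemma~\ref{l1} and Proposition~\ref{vol1}) to obtain the unconditional lower growth (\ref{voleq2}) directly, with no contradiction argument.
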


Theorem \ref{harmonicrigidity} follows from Theorem \ref{ccm} and Theorem \ref{mainthm}.	Let $M$ be an open $n$-manifold with $\Ric\geq0$ and $\mathrm{IV}(M)<2$. By Theorem \ref{ccm}, if there exists a nonconstant linear growth harmonic function on $M$, then $M$ is $1$-Euclidean at infinity. Since $\mathrm{IV}(M)<2$, by Theorem \ref{mainthm}, $M$ can only be $1$-Euclidean of type I, and thus $M\cong \mathbb{R}\times N$ for some compact mianifold  $N$ (see Proposition \ref{uniqueR}).  
	
	The word $k$-Euclidean comes from Cheeger-Colding \cite{CCI}, where a point $p$ in a Ricci limit space is called $k$-Euclidean if any tangent cone at $p$ split off an $\mathbb{R}^k$.

	The proof of Theorem \ref{mainthm} is divided into two steps. 
	
	In Section \ref{k-Euclideanatinfinitytype}, we  prove that if $M$ is $k$-Euclidean at infinity, then either $M$ has unique asymptotic cone $\mathbb{R}^k$ or any asymptotic cone of $M$  is isometric to $\mathbb{R}^k\times Z$ with noncompact $Z$. This is an application of  the critical rescaling technique developed by Pan in \cite{pan1}. See also Pan \cite{pan2,pan3,pan4} for applications of critical rescaling technique to the geometry and topology of open manifolds with nonnegative Ricci curvature.
	
	In Section \ref{volumegrowthandasymcone}, we establish the volume growth estimate (\ref{voleq1}) and (\ref{voleq2}). To achieve this, we shall consider the space of all asymptotic cones with renormalized limit measure and establish the relationship between volume growth and renormalized limit measure. To obtain the volume growth estimate of manifolds with $k$-Euclidean at infinity property of type II, we must use the splitting theorem of $\text{RCD}(0,n)$ spaces, which is proved by Gigli in \cite{gigli}. Compared with the splitting theorem for Ricci limit spaces proved by Cheeger-Colding in \cite{almostrigidity}, an advantage of this version is that it proved that the remaining space after a line splitting is a $\text{CD}(0,n-1)$ space, thus enables us to use volume comparison on it.   
	
Finally, we pointed out that using the $k$-Euclidean at infinity condition, we can generalize Theorem \ref{noncollapserigidity} in the following way:
\begin{mainthm}\label{noncollapsed+Rkinfinity}
Let $M$ be a complete $n$-manifold with $\Ric\geq0$ and noncollapsed universal cover. Assume that $M$ is $k$-Euclidean at infinity.
Then $\mathrm{IV}(M)<k+2$ if and only if $M$ is flat and is isometric to $\mathbb{R}^k\times N^{n-k}$, where $N$ is flat and either closed or open with an $n-k-1$ dimensional soul. 	
\end{mainthm}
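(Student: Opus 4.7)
The backward direction is a direct verification. If $M=\mathbb{R}^k\times N$ with $N$ flat and either closed or open with an $(n-k-1)$-dimensional soul, then the Riemannian universal cover $\tilde M=\mathbb{R}^k\times\tilde N$ is isometric to $\mathbb{R}^n$ and thus noncollapsed; the $\mathbb{R}^k$ factor makes $M$ $k$-Euclidean at infinity; and the volume growth order of $M$ is $k$ (when $N$ is compact) or $k+1$ (when $N$ is open, since a flat manifold with a codimension-one soul has linear volume growth), so $\mathrm{IV}(M)\leq k+1<k+2$.

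For the forward direction, I would first invoke Theorem \ref{mainthm}: since $M$ is $k$-Euclidean at infinity and $\mathrm{IV}(M)<k+2$, either $M$ is of Type I, with unique asymptotic cone $\mathbb{R}^k$ and $\mathrm{IV}(M)=k$, or $M$ is of Type II, with every asymptotic cone of the form $\mathbb{R}^k\times Z$ ($Z$ noncompact) and $k+1\leq\mathrm{IV}(M)<k+2$. The central step in both cases is to promote the $\mathbb{R}^k$-factor at infinity to a metric splitting of $M$ itself, producing $M\cong\mathbb{R}^k\times N$ for some complete $(n-k)$-manifold $N$ with $\Ric\geq 0$.

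In Type I, this should be the natural generalization of Proposition \ref{uniqueR} (the $k=1$ statement invoked in the proof of Theorem \ref{harmonicrigidity}): a unique Euclidean asymptotic cone of dimension $k$ provides $k$ independent lines in $M$, and Cheeger-Gromoll splitting then gives $M\cong\mathbb{R}^k\times N$ with $N$ compact. In Type II, where only every asymptotic cone splits off $\mathbb{R}^k$, I would lift to $\tilde M$ and run the equivariant asymptotic cone argument from the proof of Theorem \ref{noncollapserigidity}: noncollapsedness of $\tilde M$ together with $\mathrm{IV}(M)<k+2$ forces the orbit growth of $\Gamma$ in the equivariant asymptotic cone $(Y,y,G)$ to exceed $n-k-2$, and combined with the $\mathbb{R}^k$ factor in $Y/G$ this should pin down $Y\cong\mathbb{R}^k\times Y_0$ equivariantly and eventually produce the $k$ required lines in $M$. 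Once $M\cong\mathbb{R}^k\times N$ is established, Type I is completed by noting that the compact $N$ has noncollapsed $\tilde N$ of polynomial volume growth of degree $n-k$, so Cheeger-Colding together with Colding's volume convergence identifies $\tilde N$ with $\mathbb{R}^{n-k}$ and thus $N$ is compact flat; Type II is completed by applying Theorem \ref{noncollapserigidity} to $N$, which is now open with noncollapsed universal cover and $\mathrm{IV}(N)=\mathrm{IV}(M)-k<2$, giving that $N$ is flat with an $(n-k-1)$-dimensional soul.

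The main obstacle is the Type II splitting step: producing $k$ genuine lines in $M$ out of only an $\mathbb{R}^k$-factor present in every asymptotic cone, under the weaker bound $\mathrm{IV}(M)<k+2$ rather than $<2$. The difficulty lies in adapting the box-dimension computation in the proof of Theorem \ref{noncollapserigidity} to this higher-dimensional setting and tracking the interplay between the $\mathbb{R}^k$ factor of $Y/G$, the orbit $Gy$, and the metric cone structure of $Y$ furnished by Cheeger-Colding.
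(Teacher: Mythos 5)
Your proposal has two genuine gaps, one of which is fatal as stated. The Type I step --- ``a unique Euclidean asymptotic cone of dimension $k$ provides $k$ independent lines in $M$, and Cheeger--Gromoll splitting then gives $M\cong\mathbb{R}^k\times N$'' --- is false for $k\geq 2$. Proposition \ref{uniqueR} is special to $k=1$: there the limit $\mathbb{R}$ contains no line at positive distance from the origin, which is exactly what rules out the bad case $d(p,a_i)\to\infty$; for $k\geq 2$ that obstruction disappears, and indeed the paper's own remark after Theorem \ref{noncollapsed+Rkinfinity} gives a counterexample: the Schwarzschild metric on $S^{k-1}\times\mathbb{R}^2$ has unique asymptotic cone $\mathbb{R}^k$ yet contains no line. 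Any splitting of $M$ must therefore come \emph{after} using the noncollapsed universal cover, not before. The Type II step, which you candidly flag as the main obstacle, is indeed a gap you do not close: trying to manufacture $k$ lines in $M$ from an $\mathbb{R}^k$-factor present only in asymptotic cones is the wrong target.

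The paper's proof avoids both problems by never splitting into Types I and II (Theorem \ref{mainthm} is not used) and never producing lines in $M$. It works entirely in one equivariant asymptotic cone $(r_i^{-1}\tilde M,\tilde p,\Gamma)\to(Y,y,G)$ chosen via Proposition \ref{upboxdim} so that $\mathrm{dim}_{lb}(Gy)>n-k-2$ (this part you do have, via the fundamental domain estimate $\#\Gamma(2R)\geq \V(B_R(\tilde p))/\V(B_R(p))$). The $k$-Euclidean at infinity hypothesis enters only to say $Y/G\cong\mathbb{R}^k\times Z'$; since $Y\to Y/G$ is a submetry, $Y\cong\mathbb{R}^k\times Y'$ with $Gy\subseteq\{0^k\}\times Y'$. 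Noncollapsedness makes $Y$ a metric cone $\mathbb{R}^{k+m}\times C(X)$ with vertex $y$ and $C(X)$ lineless, and since $G$ permutes vertices, $Gy\subseteq\mathbb{R}^{k+m}\times\{v\}$; intersecting the two constraints confines $Gy$ to $\{0^k\}\times\mathbb{R}^m\times\{v\}$, so the box-dimension bound forces $m\geq n-k-1$. Then $Y$ splits off $\mathbb{R}^{n-1}$, the codimension-two singular set theorem gives $Y\cong\mathbb{R}^n$, Colding's volume convergence gives $\tilde M\cong\mathbb{R}^n$, and only now --- with $M$ flat --- does the $k$-Euclidean at infinity condition yield the isometric splitting $M\cong\mathbb{R}^k\times N$ and the soul dimension from $\mathrm{IV}(M)<k+2$. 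Your instinct to route the conclusion through a splitting $M\cong\mathbb{R}^k\times N$ plus Theorem \ref{noncollapserigidity} applied to $N$ is the reverse of the actual logical order, and without the cone-vertex argument above there is no way to carry it out.
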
	
\begin{rem}Note that	Schwarzschild metric on $N^{k+1}=S^{k-1}\times R^2$ is $k$-Euclidean of type I, and thus $\mathrm{IV}(N)=\mathrm{SV}(N)=k$ by Theorem \ref{mainthm}, but $N$ is not flat. This shows the necessity of the noncollapsed condition on the universal cover in Theorem \ref{noncollapsed+Rkinfinity}. 
\end{rem}
Indeed, put $k=0$ in Theorem \ref{noncollapsed+Rkinfinity}, we obtain Theorem \ref{noncollapserigidity}.

\textit{Acknowledgement.}  The author would like to express his sincere gratitude to Professor Xiaochun Rong for his continuous encouragement. The author would like to thank Professor Jiayin Pan for helpful discussions and  suggestions.

\tableofcontents

	\section{Proof of  Theorem \ref{noncollapsed+Rkinfinity} }\label{proofofA}
	In this section, we prove Theorem \ref{noncollapsed+Rkinfinity}. Let $M$ be a complete $n$-manifold with $\Ric\geq0$. Let $\pi:(\tilde{M},\tilde{p})\rightarrow (M,p)$ be the Riemannian universal cover with deck transformation group $\Gamma$. Denote by
	$$\Gamma(R)=\{g\in\Gamma\mid d(\tilde{p},g\tilde{p})\leq R\}.$$
	
	We define the supremum of the orbit growth order of $\Gamma$, denoted by $\mathrm{SO}(\Gamma)$, by
	\begin{equation*}
	\mathrm{SO}(\Gamma)=\sup\{s\mid \limsup\limits_{R\to\infty} \frac{\#(\Gamma(R))}{R^s}>0\}.
	\end{equation*}

We recall the definition of lower Box dimension. For a metric space $X$, a bounded subset $A\subset X$, and an $\epsilon>0$, the $\epsilon$-capacity of $A$ is defined by
$$\mathrm{Cap}(A;\epsilon)=\sup\{k\mid  \text{there are } x_1,\cdots,x_k\in A \text{ such that } d(x_i,x_j)\geq\epsilon, \forall i\neq j \} .$$

The lower Box dimension of $A$, denoted by $\mathrm{dim}_{lb}(A)$, is given by 
$$\mathrm{dim}_{lb}(A)=\liminf\limits_{\epsilon\to0} - \frac{\ln \mathrm{Cap}(A;\epsilon) }
{\ln \epsilon }.$$
We also define the lower box dimension of $X$ as $\mathrm{dim}_{lb}(X)=\sup\limits_{A}\mathrm{dim}_{lb}(A)$, where $A$ run over all bounded subset of $X$.

The following slope lemma \ref{slopelemma}  is inspired by Gromov \cite{Gromov}.

\begin{lem}\label{slopelemma}
	Let $f:[0,\infty)\rightarrow \mathbb{R}$ be a function that is upper bounded on any finite interval: $f|_{[0,R]}\leq C(R)<\infty$ for any  $R>0$. Assume that $f(s_i)\geq ks_i$ for some $k>0$ and a sequence  $s_i\to\infty$. Then for any $l\geq1$, we can find a sequence $r_i\to\infty$ such that
	$$f(r_i)-f(t)> (k-l^{-1})(r_i-t), \forall  t\in [r_i-l, r_i-1] .$$
\end{lem}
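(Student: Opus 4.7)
The plan is a ``tilt and approximately maximize'' argument in the spirit of Ekeland's variational principle. First I fix parameters $\eta \in (0, l^{-1})$ and $\delta \in (0, l^{-1}-\eta)$, and pass to the tilted function
$$h(r) := f(r) - (k-\eta)r,$$
so that the hypothesis $f(s_i)\geq ks_i$ upgrades to $h(s_i)\geq \eta s_i \to \infty$, while $h$ inherits from $f$ the property of being upper bounded on every bounded interval.

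For each $i$, I select $r_i \in [0, s_i]$ to be a near-maximizer of $h$ on that interval, i.e.,
$$h(r_i) \geq \sup_{[0, s_i]} h - \delta.$$
The first thing to verify is that $r_i \to \infty$. The choice forces $h(r_i) \geq h(s_i) - \delta \geq \eta s_i - \delta$, which tends to infinity, whereas on each fixed $[0, R]$ the function $h$ is bounded above by a finite constant coming from the standing hypothesis $f\leq C(R)$. Hence any prescribed $R$ is eventually exceeded by $r_i$.

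The second step is the slope extraction. Once $r_i \geq l$, the interval $[r_i-l, r_i-1]$ lies inside $[0, s_i]$, and the near-maximality of $h(r_i)$ gives $h(t) \leq h(r_i) + \delta$ for every such $t$. Rewriting in terms of $f$, this reads
$$f(r_i) - f(t) \geq (k-\eta)(r_i - t) - \delta.$$
Because $r_i - t \geq 1$ and $\delta < l^{-1}-\eta$, the right-hand side strictly exceeds $(k - l^{-1})(r_i - t)$, which is the desired conclusion.

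The only delicate point is that $f$ is not assumed continuous or even upper semicontinuous, so $\sup_{[0,s_i]} h$ may not be attained. This is precisely the reason for introducing both a positive slack $\delta$ in the selection of $r_i$ and a tilt parameter $\eta$ strictly smaller than $l^{-1}$: the gap $l^{-1}-\eta$ absorbs the slack $\delta$ uniformly on $[r_i-l, r_i-1]$. I anticipate no further obstacle beyond arranging these two parameters in the correct order.
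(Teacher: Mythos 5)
Your proof is correct. Every step checks out: the tilted function $h(r)=f(r)-(k-\eta)r$ is still bounded above on bounded intervals, its approximate maximizers $r_i$ on $[0,s_i]$ satisfy $h(r_i)\geq \eta s_i-\delta\to\infty$ and hence escape every fixed $[0,R]$, and for $t\in[r_i-l,r_i-1]$ the near-maximality yields $f(r_i)-f(t)\geq(k-\eta)(r_i-t)-\delta$, which strictly dominates $(k-l^{-1})(r_i-t)$ precisely because $r_i-t\geq1$ and $\delta<l^{-1}-\eta$. You also correctly flagged the one genuine subtlety (no continuity, so the supremum need not be attained) and handled it with the slack $\delta$.

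Your route is genuinely different from the paper's. The paper argues by contradiction: assuming no good point exists above some threshold $N(l)$, it descends from each $s_i$ through a chain $s_i=r_0>r_1>\cdots>r_{k_i}$ with steps of length in $[1,l]$ along which $f$ drops at average rate at most $k-l^{-1}$, then telescopes to get $ks_i\leq(k-l^{-1})s_i+C$, which fails for large $s_i$. Your Ekeland-style tilt-and-almost-maximize argument is direct, avoids both the iteration and the contradiction, and in fact delivers slightly more: the inequality $f(r_i)-f(t)>(k-l^{-1})(r_i-t)$ holds for all $t\in[0,r_i-1]$, not only on the window $[r_i-l,r_i-1]$ (though only the stated window is needed in Proposition \ref{upboxdim}). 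The paper's chain argument has the mild advantage of not needing any auxiliary parameters, but yours is cleaner to verify.
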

\begin{proof}
Assume the Lemma does not hold for some $l\geq 1$. That is, there is an $N(l)>0$, such that for any $r>N(l)$, we have
$$f(r)-f(t_r)\leq (k-l^{-1})(r-t_r)$$
for some $t_r\in [r-l,r-1]$.	Then for $r_0=s_i> N(l)$, we can find a $r_1$ such that  $r_0-l\leq r_1\leq r_0-1$ and $f(r_0)-f(r_1)\leq (k-l^{-1})(r_0-r_1)$. If we still have $r_1>K(l)$, then we can find an $r_2$ such that  $r_1-l\leq r_2\leq r_1-1$ and $f(r_1)-f(r_2)\leq (k-l^{-1})(r_1-r_2)$. Inductively, we can find $r_0=s_i,r_1,\cdots, r_{k_i}$ such that the follow hold:

1. $r_{j}-l\leq r_{j+1}\leq r_j-1$;

2.  $f(r_j)-f(r_{j+1})\leq (k-l^{-1})(r_j-r_{j+1})$ for every $j=0,\cdots, k_i-1$;

3. $r_{k_i-1}>K(l)$, and $r_{k_i}\leq K(l)$.

We have
\begin{align*}
	kr_0 \leq& f(r_0)  \\
	\leq& (f(r_0)-f(r_1))+(f(r_1)-f(r_2))	+\\
	& \cdots+ (f(r_{k_i-1})-f(r_{k_i}))+f(r_{k_i})\\
	\leq & (k-l^{-1})(r_0-r_{k_i})+C(K(l)). 
\end{align*}
When $i\to\infty$, we have $r_0=s_{i}\to\infty$, this leads to a contradiction.

\end{proof}

Lemma \ref{slopelemma} allows us to prove the following general Proposition that relates orbit growth and asymptotic geometry of universal cover. 

\begin{prop} \label{upboxdim}
	There exists an equivariant asymptotic cone of $(\tilde{M},\tilde{p},\Gamma)$, denoted by $(Y,y,G)$, such that the lower box dimension of $Gy$, $\mathrm{dim}_{lb}(Gy)\geq \mathrm{SO}(\Gamma)$.
\end{prop}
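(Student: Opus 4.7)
The plan is to apply Lemma~\ref{slopelemma} to the logarithm of the orbit-counting function to extract scales at which the orbit grows consistently polynomially, and then translate this into a capacity lower bound inside an equivariant asymptotic cone. Set $F(R)=\ln\#\Gamma(e^R)$; since $\Ric\geq0$ on $\tilde M$, Bishop volume comparison yields $\#\Gamma(R)\leq CR^n$, so $F(R)\leq nR+C$ is bounded on finite intervals. Fix $\alpha<\mathrm{SO}(\Gamma)$; by definition there exist $S_i\to\infty$ with $\#\Gamma(S_i)\geq S_i^\alpha$, hence $F(\ln S_i)\geq\alpha\ln S_i$ for all large $i$. For each integer $l\geq 1$, Lemma~\ref{slopelemma} then produces $r_i\to\infty$ with $F(r_i)-F(t)>(\alpha-l^{-1})(r_i-t)$ for $t\in[r_i-l,r_i-1]$. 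Writing $R_i=e^{r_i}$ and $\epsilon=e^{t-r_i}\in[e^{-l},e^{-1}]$, this rearranges to
\begin{equation*}
\frac{\#\Gamma(R_i)}{\#\Gamma(\epsilon R_i)}>\epsilon^{-(\alpha-l^{-1})}.
\end{equation*}

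The next step is to read this as a capacity estimate. Because $\Gamma$ acts on $\tilde M$ freely and by isometries, any ball $B_{\epsilon R_i}(g\tilde p)$ meets $\Gamma\tilde p$ in exactly $\#\Gamma(\epsilon R_i)$ points. Consequently, a maximal $\epsilon R_i$-separated subset $E_i\subset\Gamma(R_i)\tilde p$ covers $\Gamma(R_i)\tilde p$ by balls $B_{\epsilon R_i}(e)$, so $\#E_i\cdot\#\Gamma(\epsilon R_i)\geq\#\Gamma(R_i)$ and hence $\#E_i>\epsilon^{-(\alpha-l^{-1})}$. After rescaling $\tilde M$ by $R_i^{-1}$, $E_i$ becomes an $\epsilon$-separated subset of the rescaled orbit lying in $\overline{B_1}(\tilde p)$. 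Taking an equivariant Gromov--Hausdorff subsequential limit $(R_i^{-1}\tilde M,\tilde p,\Gamma)\to(Y,y,G)$---which exists by Gromov precompactness under $\Ric\geq 0$---the $\epsilon$-separation is preserved, and the limit points belong to $Gy$ by equivariance, giving
\begin{equation*}
\mathrm{Cap}\bigl(Gy\cap\overline{B_1}(y);\epsilon\bigr)>\epsilon^{-(\alpha-l^{-1})},\qquad \epsilon\in[e^{-l},e^{-1}].
\end{equation*}

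Finally, I would run a diagonal argument: let $\alpha_j\uparrow\mathrm{SO}(\Gamma)$ and $l_j\to\infty$, apply the construction above for each pair $(\alpha_j,l_j)$ to obtain sequences $r_i^{(j)}\to\infty$, and extract a diagonal $R_j=e^{r_{i(j)}^{(j)}}\to\infty$ whose equivariant asymptotic cone $(Y,y,G)$ exists. For each fixed small $\epsilon>0$ we have $\epsilon\in[e^{-l_j},e^{-1}]$ for every $j$ large, so lower semicontinuity of capacity under Hausdorff convergence of orbits yields
\begin{equation*}
\mathrm{Cap}\bigl(Gy\cap\overline{B_1}(y);\epsilon\bigr)\geq \liminf_{j\to\infty} \epsilon^{-(\alpha_j-l_j^{-1})}=\epsilon^{-\mathrm{SO}(\Gamma)}.
\end{equation*}
Taking $\liminf_{\epsilon\to 0}-\ln\mathrm{Cap}/\ln\epsilon$ then gives $\dim_{lb}(Gy)\geq\mathrm{SO}(\Gamma)$. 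The main obstacle is this diagonal extraction: different slope-lemma parameters produce different scales, and one must settle on a single scaling along which both (a) the equivariant cone exists and (b) the capacity bound survives the limit for arbitrarily small $\epsilon$. Precompactness under $\Ric\geq 0$ handles (a); since only the slope-lemma inequality at the specific chosen scale $r_i^{(l)}$ enters the capacity estimate, lower semicontinuity of capacity under equivariant GH convergence takes care of (b).
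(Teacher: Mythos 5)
Your proof is correct and follows essentially the same route as the paper's: apply the slope lemma to $\ln\#\Gamma(e^R)$ to get the ratio bound $\#\Gamma(R_i)/\#\Gamma(\epsilon R_i)>\epsilon^{-(\alpha-l^{-1})}$, convert it into an $\epsilon$-capacity lower bound for the rescaled orbit via the packing/covering count $\#E_i\cdot\#\Gamma(\epsilon R_i)\geq\#\Gamma(R_i)$, and pass the bound to an equivariant asymptotic cone along a diagonal sequence of scales. The only differences are notational (the paper couples the parameters as $(m-\tfrac{1}{i},i)$ rather than $(\alpha_j,l_j)$ and rounds the scales to integer powers of $e$ before invoking the slope lemma).
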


\begin{proof}
We	denote  $m=\mathrm{SO}(\Gamma)$. By definition, for every positive integer $i$ and  $m_i=m-\frac{1}{i}$, there exists a sequence $R_{ij}\to\infty$ such that
	$\#(\Gamma(R_{ij}))>e^{m_i} R_{ij}^{m_i}. $
	Assume that $R_{ij}\in [e^{k_{ij}},e^{k_{ij}+1})$, then
	\begin{align*}
		\#(\Gamma(e^{k_{ij}+1}))\geq & \#(\Gamma(R_{ij}))\\
		> & e^{m_i}R_{ij}^{m_i}\\
		\geq &  (e^{k_{ij}+1})^{m_i}.
	\end{align*}
	Put $f(k)=\ln (\#(\Gamma(e^k)) , r_{ij}=k_{ij}+1 $, then $f(r_{ij})>m_ir_{ij}$. Using Lemma \ref{slopelemma}, we find a sequence  $a_{ij}\to\infty$	such that
$$f(a_{ij})-f(t)>(m_i-\frac{1}{i})(a_{ij}-t),\forall t\in [a_{ij}-i, a_{ij}-1].  $$
That is,
\begin{equation} \label{boxeq1}
	\frac{\#(\Gamma(e^{a_{ij}}))}{\#(\Gamma(e^{a_{ij}-t'}))}> e^{(m_i -\frac{1}{i})t'} \text{ for }t'=[1,i] .
\end{equation}

We choose $a_{ij_i}\to\infty$ and put $r_i=e^{a_{ij_i}}$. Let $d$ be the dinstance function on $\tilde{M}$, we denote by $\Gamma_i\tilde{p}$ the orbit $\Gamma\tilde{p}$ equipped with the distance $d_i(g_1\tilde{p},g_2\tilde{p}):= r_i^{-1}d(g_1\tilde{p},g_2\tilde{p})$. For $x\in \Gamma\tilde{p}$ and $r>0$, we put 
$$B^i_r(x)=\{y\in \Gamma_i\tilde{p}\mid d_i(x,y)\leq r \}.$$
For $l\geq 1$, we choose $x_1,x_2,\cdots,x_s\in B_1^i(\tilde{p})$ such that
$$s=\mathrm{Cap}(B^i_1(\tilde{p});e^{-l}), d_i(x_a,x_b)\geq e^{-l} \text{ for any } a\neq b.$$
We have $B^i_1(\tilde{p})\subset \bigcup\limits_{q=1}^s B^i_{e^{-l}}(x_q)$. Since $\#B^i_r(x_a)=\#B^i_r(x_b)$ for any $x_a,x_b\in \Gamma\tilde{p}$, we have  
\begin{equation}\label{boxeq2}
	\mathrm{Cap}(B^i_1(\tilde{p});e^{-l})\geq \frac{\#(B^i_1(\tilde{p}))}{\#(B^i_{e^{-l}}(\tilde{p}))}=\frac{\#(\Gamma(r_i))}{\#(\Gamma(e^{-l}r_i))}. 
\end{equation}

After passing to a subsequence, consider the equivariant point Gromov-Hausdorff convergence (\cite{eGH1,eGH2}) 
	$(r_i^{-1}\tilde{M},\tilde{p},\Gamma)\to (Y,y,G)$.  Denote by $B^\infty_r(y)=\{w \in Gy\mid d_Y(y,w)\leq r\} $. By (\ref{boxeq1}) and (\ref{boxeq2}), we have
	$$\mathrm{Cap}(B^\infty_1(z);e^{-l} )\geq \limsup\limits_{i\to\infty} \mathrm{Cap}(B^{i}_1(\tilde{p});e^{-l})\geq e^{m l}, \text{for any } l\geq 1.$$
	This implies that $\mathrm{dim}_{lb}(Gz)\geq \mathrm{dim}_{lb}(B_1(z)) \geq m$.
\end{proof}
\begin{rem}
	In Proposition 4.2 of \cite{panye}, Pan-Ye proved that for any asymptotic cone $(Y,y,G)$ of $(\tilde{M},\tilde{p},\Gamma)$, $Gy$ has hausdorff dimension $l$, under the condition that $\Gamma$ has stable orbit growth of order $l$: $c_1R^l\leq\#(\Gamma(R))\leq c_2R^l, \forall R\geq1$.  
\end{rem}

	\begin{proof}[Proof of Theorem \ref{noncollapsed+Rkinfinity}]
		Assume that $\mathrm{IV}(M)<k+2$.  By a fundamental domain argument (see Theorem 4 (1.2) in \cite{Ye} and its proof), we have:
		$$\#(\Gamma(2R))\geq \frac{\V (B_R(\tilde{p}))}{\V (B_R(p))}.$$	
		Since $\tilde{M}$ has Euclidean volume growth and $\mathrm{IV}(M)<k+2$, we conclude that $\mathrm{SO}(\Gamma)>n-k-2$. By proposition \ref{upboxdim}, we can find a sequence  $r_i\to\infty$  such that for the equivariant Gromov-Hausdorff convergence  
		$$\begin{CD}
			(r_i^{-1}\tilde{M},\tilde{p},\Gamma) @>GH>> (Y,y,G)\\
			@VV \pi_i V @VV \pi V\\
			(r_i^{-1}M,p) @>GH>> (Z=Y/G,z)
		\end{CD}$$	
		we have $\mathrm{dim}_{lb}(Gy)>n-k-2$.
		Since $M$ is $k$-Euclidean at infinity, we have $(Z,z)\cong (\mathbb{R}^k\times Z',(0^k,z'))$. Since $\pi$ is a submetry, we can write $(Y,y)=(\mathbb{R}^k\times Y',(0^k,y'))$. Note that $\pi$ preserves the $\mathbb{R}^k$ factor, thus we have $G\cdot y\subseteq \{0^k\}\times Y'$.
		
Since $\tilde{M}$ is noncollapsed, $Y$ is a metric cone with vertex $y$. So we can write 
		$$Y=\mathbb{R}^k\times Y'= \mathbb{R}^k \times (\mathbb{R}^m \times C(X))=\mathbb{R}^{k+m}\times C(X)$$
		and $y=(0^k,0^m,v)$, where $C(X)$ does not contain any line and $v$ is the unique vertex of $C(X)$. we have inclusion
	$Gy\subseteq\mathbb{R}^{k+m}\times \{v\}$.
		Together with $Gy\subseteq \{0^k\}\times Y'$, we derive
		$$Gy \subseteq \{0^k\}\times \mathbb{R}^m\times \{v\}.$$
		As $Gy$ has lower box dimension $\mathrm{dim}_{lb}>n-k-2$, we have $m\geq n-k-1$.
		Thus $Y$ splits off an $\mathbb{R}^{n-1}$ factor.  By Cheeger-Colding \cite{CCI}, the set of singular points in $Y$  has codimension at least 2, thus $Y\cong\mathbb{R}^n$. By Colding \cite{Colding},  $\tilde{M}$ itself is isometric to $\mathbb{R}^n$. So $M$ is flat.  Since $M$ is $k$-Euclidean at infinity,  $M$ is isometric to $\mathbb{R}^k\times N^{n-k}$ for some flat manifold $N$. Since $\mathrm{IV}(M)<k+2$, $N$ is either closed or an open flat manifold with an $n-k-1$ dimensional soul.
	\end{proof}

	\section{$k$-Euclidean at infinity}\label{k-Euclideanatinfinitytype}
	In this section, we prove the first part of Theorem \ref{mainthm}:
	\begin{prop} \label{criticalres}
		If $M$ is $k$-Euclidean at infinity, then either $M$ has unique asymptotic cone $\mathbb{R}^k$, or any asymptotic cone $M_{\infty}$ split as $\mathbb{R}^k\times Z$, where $Z$ is noncompact and may rely on $M_\infty$. 
	\end{prop}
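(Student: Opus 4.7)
Proof plan: I argue by contradiction using Pan's critical rescaling technique from \cite{pan1}. Assume $M$ is $k$-Euclidean at infinity and the dichotomy fails, so every asymptotic cone splits as $\mathbb{R}^k\times Z$ but two cones have different ``perpendicular profiles.'' A preliminary observation shows we may assume both $\mathbb{R}^k$ (along some $s_j\to\infty$) and $\mathbb{R}^k\times W$ with $W\neq\{\mathrm{pt}\}$ (along some $r_i\to\infty$) occur as asymptotic cones: indeed, if an asymptotic cone $\mathbb{R}^k\times K$ with $K$ compact and nontrivial appeared, then a diagonal rescaling $\lambda_i r_i$ with $\lambda_i\to\infty$ chosen so that $\lambda_i r_i\to\infty$ collapses the bounded factor $\lambda_i^{-1}K$ and produces $\mathbb{R}^k$ as another asymptotic cone, reducing to the dichotomy-failure situation just described.

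The critical rescaling uses a continuous scale-function measuring ``perpendicular spread.'' By Cheeger-Colding's quantitative almost-splitting theorem, for each sufficiently large $r$ there exists an $\epsilon(r)$-splitting map $u_r\colon B_{2r}(p)\to\mathbb{R}^k$ with $\epsilon(r)\to 0$, chosen continuously in $r$ via a Poisson-type harmonic construction. Define
\[
\phi(r) = r^{-1}\,\mathrm{diam}\bigl(u_r^{-1}(0)\cap B_r(p)\bigr),
\]
which is continuous in $r$ and satisfies $\phi(r)\to\min\{\mathrm{diam}(W),2\}$ on any convergent subsequence along which $r^{-1}M$ converges to $\mathbb{R}^k\times W$. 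The hypotheses give $\phi(s_j)\to 0$ and $\phi(r_i)\to c>0$, so the intermediate value theorem yields critical scales $t_\ell\to\infty$ at which $\phi(t_\ell)\to\delta$ for any preassigned $\delta\in(0,c)$. Passing to a pointed Gromov-Hausdorff limit extracts a critical asymptotic cone $Y_*=\mathbb{R}^k\times W_*$ with $\mathrm{diam}(W_*)=\delta>0$, a nontrivial bounded metric space.

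The contradiction is obtained by a second application of critical rescaling. Pairing each $t_\ell$ with subscales $t_\ell/m_\ell$, $m_\ell\to\infty$ slowly, and taking a further diagonal GH limit, one obtains an asymptotic cone of $M$ of the form $\mathbb{R}^k\times T_{w_0}W_*$, where $T_{w_0}W_*$ is the tangent cone of $W_*$ at its basepoint, necessarily a nontrivial noncompact metric cone by Cheeger-Colding's tangent cone theory for Ricci limit spaces. Hence $M$ admits an asymptotic cone with noncompact perpendicular factor, and by Cheeger-Colding's line splitting theorem this factor splits off an additional $\mathbb{R}$; the subscale cones therefore possess a splitting rank strictly greater than the $k$ of the $s_j$-cones. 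A final critical rescaling interpolating between the $s_j$ and $t_\ell/m_\ell$ regimes, formalized via Pan's framework in \cite{pan1}, extracts a borderline cone whose structure violates Cheeger-Colding's rigid splitting characterization, yielding the desired contradiction. The main technical obstacle is the continuous-in-$r$ construction of the $\epsilon(r)$-splitting maps needed to make $\phi$ genuinely continuous, together with the careful coordination of the iterated critical rescalings so that the extracted borderline cone inherits a well-defined splitting structure amenable to Cheeger-Colding rigidity; the remaining steps are standard applications within Pan's framework.
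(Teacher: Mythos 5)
Your overall strategy (critical rescaling in the spirit of Pan, reducing first to the coexistence of $\mathbb{R}^k$ and a cone with nontrivial perpendicular factor) matches the paper's, and your preliminary reduction via further blow-downs of a cone $\mathbb{R}^k\times K$ is essentially the paper's first step. But there are two genuine gaps. The first is the scale function: you base an intermediate value argument on $\phi(r)=r^{-1}\mathrm{diam}\bigl(u_r^{-1}(0)\cap B_r(p)\bigr)$ for splitting maps $u_r$ ``chosen continuously in $r$.'' No such continuous family is available off the shelf, the diameter of a level set of an almost-splitting map is not stable under perturbation or under Gromov--Hausdorff convergence (the level set could even be empty), and you yourself flag this as the main obstacle. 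The point of the critical rescaling technique is that no continuous function is needed: the paper fixes $\epsilon>0$ so that every asymptotic cone $\mathbb{R}^k\times Z'$ with $Z'$ noncompact has pointed GH distance at least $\epsilon$ from $(\mathbb{R}^k,0)$, sets $L_i=\{t\in[1,l_i]\mid d_{pGH}((tN_i,p),(\mathbb{R}^k,0))\leq\epsilon/2\}$, and takes $h_i=\inf L_i$; only limits along sequences of scales are ever used.

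The second gap is more serious: no contradiction is actually derived. At your critical scale you obtain a cone $\mathbb{R}^k\times W_*$ with $W_*$ compact and nontrivial; you then blow up (tangent cone at the basepoint) and claim the resulting noncompact factor ``splits off an additional $\mathbb{R}$ by the line splitting theorem.'' That step is false: a noncompact factor need not contain a line, and in the collapsed setting the tangent cone need not even be a metric cone. Even granting a noncompact perpendicular factor at subscales, you have only reproduced the hypothesis you started from (both $\mathbb{R}^k$ and a noncompact-factor cone occur as asymptotic cones), so the ``final critical rescaling violating Cheeger--Colding's rigid splitting characterization'' is circular and names no contradiction. The correct move at the critical scale goes in the opposite direction: since $W_*$ is compact, a slight further blow-down $c\,h_i$ with $0<c<1$ shrinks $W_*$, so the limit $\mathbb{R}^k\times(cW_*)$ stays within $\epsilon/2$ of $(\mathbb{R}^k,0)$ and hence $c\,h_i\in L_i$, contradicting $h_i=\inf L_i$ (after first checking $h_i\to\infty$, so that $c\,h_i\geq 1$ and the limit at scale $h_i$ is a genuine asymptotic cone). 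Without this blow-down step your argument does not close.
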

	\begin{proof}
		We will prove the following claim:
		\begin{claim}\label{claim1}
			Let $Y_1=\mathbb{R}^k$, $Y_2=\mathbb{R}^k\times Z$, where $Z$ is noncompact, then at most one of them can be an asymptotic cone of $M$.
		\end{claim}	
		Assume  Claim \ref{claim1} holds.  If $(X,x_0)=(\mathbb{R}^k\times K,(0,a))$ with $K\neq \{pt\}$ compact  occurs as an asymptotic cone of $M$, then  a tangent cone of $X$ at $x_0$ is $\mathbb{R}^k\times K'$, where $K'$ is noncompact. Meanwhile, the asymptotic cone of $X$ is $\mathbb{R}^k$. Since any tangent cone at $x_0$ or asymptotic cone of $X$ is an asymptotic cone of $M$, this contradicts  Claim \ref{claim1}. Now Proposition \ref{criticalres} follows from Claim \ref{claim1}.
		
		Now we turn to the proof of Claim \ref{claim1}. Denote by $\Omega$ the set of all asymptotic cones of $M$.
		
		Assume that Claim \ref{claim1} does not hold, that is, both $Y_1$ and $Y_2$ belong to $\Omega$. Then there is an $\epsilon>0$ such that the $\epsilon$ neigberhood of $(Y_1,o_1)$ (according to pointed Gromov-Hausdorff distance) in $\Omega$, $B_{\epsilon}((Y_1,o_1))$ contains no elements like $\mathbb{R}^k \times Z'$ with $Z'$ noncompact.
		
		Choose  $r_i\to0,s_i\to0$ such that 
		\begin{equation*}
			\lim\limits_{i\to\infty} (s_iM, p)=(Y_1,o_1), \lim\limits_{i\to\infty} (r_iM, p)=(Y_2,o_2).
		\end{equation*}
		Write $N_i=r_iM$, $l_i=r_i^{-1}s_i$. Then 
		$$	\lim\limits_{i\to\infty} (N_i, p)=(Y_2,o_2), \lim\limits_{i\to\infty} (l_iN_i, p)=(Y_1,o_1).$$
		After passing to a subsequence, we can always assume that $l_i\to\infty$. 
		
		Set $L_i= \{t\in [1,l_i] \mid d_{pGH}((tN_i,p),(Y_1,o_1) )\leq \frac{\epsilon}{2} \}$, then $l_i\in L_i$ for $i$ large. Let $h_i=\inf L_i\in L_i$.
		
		We claim that $h_i\to\infty$. If not, (after passing subsequence) the limit $\lim\limits_{i\to\infty}(h_iN_i, p) $ will be  $(rY_2,o_2)$ for some $r\geq 1$. Since $rY_2=\mathbb{R}^k\times (rZ)$ with $rZ$ noncompact, we have $d_{pGH}((rY_2,o_2),(Y_1,o_1))\geq \epsilon$. This  contradicts  with $h_i\in L_i$  for $i$ large.
		
		Note that $h_ir_i\leq l_ir_i=s_i\to 0 $, so we can write $$\lim\limits_{i\to\infty} (h_iN_i,p)=\lim\limits_{i\to\infty} (h_ir_iM,p)= (\mathbb{R}^k\times K,o_3)\in \Omega.$$ 
		Since $h_i\in L_i$, $K$ must be compact. Choose $0<c<1$ such that $(\mathbb{R}^k\times (cK),o_3)\in B_{\frac{\epsilon}{3}}(Y_1)$. We have $\lim\limits_{i\to\infty} (ch_iN_i,p)=(\mathbb{R}^k\times (cK),o_3)$,  thus $ch_i\in L_i$ for $i$ large.
		This is impossible since $h_i=\inf L_i$.
	\end{proof}
	The following result is well-known, we put a proof here for readers' convenience:
	\begin{prop}\label{uniqueR}
		If $M$ has unique asymptotic cone $\mathbb{R}$, then $M$ is isometric to $ \mathbb{R}\times N$ for some compact manifold $N$.
	\end{prop}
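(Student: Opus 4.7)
The plan is to execute three steps: first, deduce that $M$ has at least two ends; second, construct a line in $M$ and apply the Cheeger--Gromoll splitting theorem; third, use the hypothesis once more to force the second factor to be compact.

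For step one, I would show that uniqueness of the asymptotic cone as $\mathbb{R}$ rules out $M$ being one-ended. For any $R_n \to \infty$, pointed Gromov--Hausdorff convergence $(R_n^{-1} M, p) \to (\mathbb{R}, 0)$ provides $\epsilon_n$-approximations with $\epsilon_n \to 0$; selecting $x_n, y_n \in M$ corresponding to $\pm 1 \in \mathbb{R}$ gives $d(p, x_n), d(p, y_n) = R_n(1 + o(1))$ and $d(x_n, y_n) = 2 R_n(1 + o(1))$. If $M$ were one-ended, then $x_n$ and $y_n$ could be joined by a continuous path in $M \setminus B_{R_n/2}(p)$. Sampling such a path finely and transporting to $\mathbb{R}$ via the GH-approximation produces a finite sequence of real numbers with uniformly small consecutive jumps (say of size $<1/6$), all of absolute value $>1/3$, yet running from $+1+o(1)$ to $-1+o(1)$---impossible by a discrete intermediate-value argument. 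Thus $M$ has at least two ends.

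For step two, I fix a compact set $K \subset M$ such that $M \setminus K$ has two distinct unbounded components $U_\pm$, pick $p_n^\pm \in U_\pm$ with $d(p, p_n^\pm) \to \infty$, and let $\sigma_n$ be a minimizing geodesic joining $p_n^-$ and $p_n^+$. Each $\sigma_n$ intersects $K$ in some $q_n$; compactness of $K$ yields a subsequential limit $q \in K$. Reparametrizing $\sigma_n$ by arc length with $\sigma_n(0) = q_n$, Arzel\`a--Ascoli produces a limit minimizing geodesic $\sigma\colon \mathbb{R} \to M$---a line---and the Cheeger--Gromoll splitting theorem delivers an isometry $M \cong \mathbb{R} \times N$ for some complete $(n-1)$-manifold $N$ with $\Ric \geq 0$. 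For step three, the asymptotic cone of $\mathbb{R} \times N$ at basepoint $(0, q)$ splits as $\mathbb{R} \times C$, where $C$ is an asymptotic cone of $N$ at $q$; uniqueness of the asymptotic cone of $M$ as $\mathbb{R}$ then forces $C$ to be a single point. If $N$ were noncompact, any sequence $q_k \in N$ with $d(q, q_k) \to \infty$ would, after rescaling by $1/d(q, q_k)$, produce a point at distance $1$ from the basepoint in some asymptotic cone of $N$, contradicting $C = \{\mathrm{pt}\}$. Hence $N$ is bounded, complete, and connected, so compact.

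The main obstacle is step one: transporting the topological separation $\mathbb{R} \setminus \{0\} = (-\infty, 0) \sqcup (0, \infty)$ from the Gromov--Hausdorff limit back to component-separation in $M$ at finite scale. The sampling and discrete intermediate-value argument sketched above uses only continuity of paths in $M$ and the $\epsilon$-approximation bounds; everything else (Arzel\`a--Ascoli, Cheeger--Gromoll splitting, and the formula for the asymptotic cone of a product) is standard.
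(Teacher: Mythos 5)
Your steps two and three are standard and correct, but step one --- which you rightly identify as the crux --- has a genuine gap. From ``$M$ is one-ended'' you conclude that $x_n$ and $y_n$ can be joined by a path in $M\setminus B_{R_n/2}(p)$. One-endedness only says that $M\setminus B_{R_n/2}(p)$ has a \emph{unique unbounded} component; it says nothing about bounded components, and a point at distance $\approx R_n$ from $p$ may perfectly well lie in a bounded component (picture a long ``finger'' attached through a neck inside $B_{R_n/2}(p)$ and reaching out to distance $2R_n$; its outer part is a bounded component of the complement). Since $x_n$ and $y_n$ are chosen only via the Gromov--Hausdorff approximation, nothing places them in the unbounded component, so the path you need is not guaranteed to exist. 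A secondary issue: even granting the path, it is a compact set with no a priori bound on $\sup_t d(p,c(t))$ in terms of $R_n$, so it may leave the region $B_{CR_n}(p)$ on which the $\epsilon_n$-approximation to $\mathbb{R}$ is controlled, and the discrete intermediate-value argument cannot be applied to the whole path without further work.

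The paper sidesteps ends entirely and this is the cleaner route: take a minimal geodesic $\gamma_i$ from $q_{i,-}$ to $q_{i,+}$ (your $y_n$, $x_n$) and let $a_i$ be its closest point to $p$. Because the limit of $(r_i^{-1}M,p)$ is $(\mathbb{R},0)$ and the segment from $-1$ to $1$ passes through $0$, one gets $d(p,a_i)=o(r_i)$. Then either $d(p,a_i)$ stays bounded, in which case the $\gamma_i$ subconverge to a line in $M$ and Cheeger--Gromoll applies (your steps two and three then finish the proof as you wrote them), or $s_i:=d(p,a_i)\to\infty$, in which case rescaling by $s_i^{-1}$ (the limit is again $\mathbb{R}$ by uniqueness of the asymptotic cone) produces a line in $\mathbb{R}$ at distance $1$ from the basepoint, which is absurd. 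If you want to keep your ends-based outline, you must first prove that points realizing $+1$ and $-1$ can be taken in the unbounded component of $M\setminus B_{R_n/2}(p)$; as it stands that claim is unjustified.
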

	\begin{proof}
		Fix a point $p\in M$, then for any sequence $r_i\to \infty$, we have pointed Gromov-Hausdorff convergence
		$$(r_i^{-1}M,p)\xrightarrow{GH} (\mathbb{R},0) .$$
		Let $q_{i,+},q_{i,-}\in r_i^{-1}M$ such that $q_{i,+}\to 1,q_{i,-}\to -1$. Let $\gamma_i$
		be a minimal geodesic connecting $q_{i,-}$ and $q_{i,+}$ and let $a_i$ be a point on $\gamma_i$	that is closest to $p$. Then
		$$\frac{d(q_{i,-},q_{i,+})}{r_i}\to 2,\, \frac{d(a_i,p)}{r_i}\to 0.$$

		Now if $d(p,a_i)< C$ for some constant $C>0$, then $\gamma_i$ converge to a line in $M$. Thus $M\cong \mathbb{R}\times N$ by Cheeger-Gromoll splitting theorem \cite{CG_split}. Since $\mathbb{R}$ is the asymptotic cone of $M$, $N$ must be compact. 
		
		If $d(p,a_i)\to \infty$, we put $s_i=d(p,a_i)$ and  consider the convergence 
		$$(s_i^{-1}M,p)\xrightarrow{GH} (\mathbb{R},0).$$ 
		Since $s_ir_i^{-1}\to0$, passing to subsequence $\gamma_i$ converge to a line $\gamma_\infty$ such that $d(0,\gamma_\infty)=\lim\limits_{i\to\infty}s_i^{-1}d(p,\gamma_i)=1$. Such a line cannot exist on $\mathbb{R}$. So $d(p,a_i)\to \infty$ will not happen.
	\end{proof}

	\section{\label{vol-asym}Volume growth and asymptotic cone} \label{volumegrowthandasymcone}
	To prove the volume growth part of Theorem \ref{mainthm}, we shall study the relation between the renormalized limit measures (see Section 1 of \cite{CCI} for definition) on asymptotic cones and the volume growth of $M$. Let $r_i\to \infty$ such that $\lim\limits_{i\to\infty}(r^{-1}_iM,p)=(X,o,v) $, where $(X,o)$ is the pointed Gromov-Hausdorff limit of $(r_i^{-1}M,p)$ and $v$ is a renormalized limit measure on $X$.
	Then for any $R>0$, we have
	$$v(B_R(o))= \lim\limits_{i\to\infty} \frac{\V(B^{r^{-1}_iM}_R(p))}{\V(B^{r^{-1}_iM}_1(p))}=\lim\limits_{i\to\infty}\frac{\V(B_{r_iR}(p))}{\V(B_{r_i}(p))}, $$ 
	where the existence of the limit is ensured by the construction of $v$. In particular, $v(B_1(o))=1$. Note that an asymptotic cone $(X,o)$ may have different renormalized limit measures on it.

	Let $\Omega$ be the set of all $(X,o,v)$, where
	$(X,o)$ is an asymptotic cone of $M$, and $v$ is a renormalized limit measure on $X$.
	
	For $l>0,l\neq 1$, define $k_l,K_l$ by 
	\begin{align*}
		l^{k_l}=&\min\{v(B_l(o) ) \mid  (Z,o,v)\in \Omega \},\\
		l^{K_l}= &\max \{v(B_l(o) ) \mid  (Z,o,v)\in \Omega \}.
	\end{align*}
	Note that by volume comparison and $v(B_1(o))=1$, we have $0\leq  k_l\leq K_l\leq n$ when $l>1$,  and $0\leq K_l\leq k_l\leq n$ when $0<l<1$.

	\begin{lem}\label{l1} 
		$$l^{k_l}= \liminf\limits_{R\to\infty} \frac{\V(B_{lR}(p))}{\V(B_R(p))}\leq  \limsup\limits_{R\to\infty} \frac{\V(B_{lR}(p))}{\V(B_R(p))}= l^{K_l} .$$
	\end{lem}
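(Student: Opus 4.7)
The plan is to identify both sides of the claimed equalities with extremal values of $v(B_l(o))$ over the space $\Omega$, using the precompactness of $\Omega$ in the measured pointed Gromov--Hausdorff topology. This reduces the lemma to a careful application of Portmanteau-type convergence for weak convergence of measures.

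First, I would record the compactness input: by Gromov's precompactness theorem together with Cheeger--Colding's construction of renormalized limit measures, every sequence $R_i\to\infty$ admits a subsequence along which $(R_i^{-1}M,p,\mu_i)\to(X,o,v)\in\Omega$, where $\mu_i=\mathrm{vol}/\V(B_{R_i}(p))$. In particular $\Omega$ is sequentially compact, and the minimum and maximum in the definitions of $k_l,K_l$ are attained. Moreover $\mu_i(B_s(o_i))=\V(B_{sR_i}(p))/\V(B_{R_i}(p))$ for every $s>0$, which is the bridge between the two sides of the lemma.

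For the $\liminf$ equality I would prove the two inequalities separately. For $\liminf_R \V(B_{lR}(p))/\V(B_R(p))\geq l^{k_l}$: pick $R_i\to\infty$ realizing the liminf, extract a measured pGH-convergent subsequence with limit $(X,o,v)\in\Omega$, and apply the Portmanteau inequality for open sets to the ball $B_l(o)$ to obtain
\[
\liminf_R \frac{\V(B_{lR}(p))}{\V(B_R(p))}=\lim_i \mu_i(B_l(o_i))\geq v(B_l(o))\geq l^{k_l}.
\]
For the reverse inequality, pick $(X,o,v)\in\Omega$ realizing the minimum $v(B_l(o))=l^{k_l}$ with defining sequence $r_i\to\infty$; when $v(\partial B_l(o))=0$ one has $\mu_i(B_l(o_i))\to v(B_l(o))=l^{k_l}$, so $\liminf_R\leq l^{k_l}$. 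The $\limsup$ equality is established by the symmetric argument, using closed balls $\bar B_l(o)$ and Portmanteau's inequality for closed sets.

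The main obstacle is the possibility that $v(\partial B_l(o))>0$ for exceptional values of $l$, which would break the automatic convergence $\mu_i(B_l(o_i))\to v(B_l(o))$ needed in the ``upper'' half of each equality. I expect to handle this by observing that the set of ``bad'' radii is at most countable (since $s\mapsto v(B_s(o))$ is monotone and thus has only countably many jumps), and then using the monotonicity in $l$ of both sides of the desired equality, together with a sandwich argument between slightly larger and smaller ``good'' radii (and the continuity of $R\mapsto \V(B_R(p))$), to propagate the equality from a dense set of $l$ to all $l>0$, $l\neq 1$.
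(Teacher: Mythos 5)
Your argument is correct and follows essentially the same route as the paper, which simply rewrites the ratio as $\V(B^{R^{-1}M}_l(p))/\V(B^{R^{-1}M}_1(p))$ and invokes the convergence of renormalized volumes along any measured Gromov--Hausdorff convergent subsequence, together with the (sub)compactness of $\Omega$. The Portmanteau/boundary-mass complication you guard against in your last paragraph does not actually arise here: by the Cheeger--Colding construction (via Bishop--Gromov), a renormalized limit measure assigns zero mass to every metric sphere, so $\mu_i(B_l(o_i))\to v(B_l(o))$ for every $l$ without exceptional radii.
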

	\begin{proof}
		Since 
		$$ \frac{\V(B_{lR}(p))}{\V(B_R(p))}=\frac{\V(B^{R^{-1}M}_l (p))}{\V(B^{R^{-1}M}_1(p))}.$$ 
	\end{proof}
	
	\begin{prop}\label{vol1}
		If $l>1$, then $\forall\alpha>0$, we have
		\begin{equation}\label{eq3}
			\lim\limits_{R\to\infty}\frac{\V(B_R(p))}{R^{K_l+\alpha}}=0,
		\end{equation}
		\begin{equation}\label{eq4}
			\lim\limits_{R\to\infty}\frac{\V(B_R(p))}{R^{k_l-\alpha}}=\infty.
		\end{equation}

		If $0<l<1$, then $\forall\alpha>0$, we have
		\begin{equation*}
			\lim\limits_{R\to\infty}\frac{\V(B_R(p))}{R^{k_l+\alpha}}=0,\,\lim\limits_{R\to\infty}\frac{\V(B_R(p))}{R^{K_l-\alpha}}=\infty.
		\end{equation*}
	\end{prop}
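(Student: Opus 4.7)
The plan is to derive each of the four limits by a short iteration argument based on Lemma \ref{l1}. I would handle the case $l > 1$ directly, and then reduce the case $0 < l < 1$ to it by the substitution $L = 1/l$.

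For $l > 1$, write $f(R) = \V(B_R(p))$ and fix $\alpha > 0$. Since $l^{K_l + \alpha/2} > l^{K_l}$, the $\limsup$ identity in Lemma \ref{l1} yields an $R_0$ with $f(lR) \leq l^{K_l + \alpha/2} f(R)$ for every $R \geq R_0$. Iterating gives $f(l^k R_0) \leq l^{k(K_l + \alpha/2)} f(R_0)$ for every integer $k \geq 0$. For a general $R \geq R_0$ choose $k$ with $l^k R_0 \leq R < l^{k+1} R_0$; since $f$ is nondecreasing,
\[
f(R) \leq f(l^{k+1} R_0) \leq C R^{K_l + \alpha/2}
\]
for a constant $C = C(l, R_0, \alpha)$. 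Dividing by $R^{K_l + \alpha}$ and letting $R \to \infty$ gives (\ref{eq3}). A symmetric iteration of the lower bound $f(lR) \geq l^{k_l - \alpha/2} f(R)$, which is valid for large $R$ by the $\liminf$ identity of Lemma \ref{l1}, produces $f(R) \geq C' R^{k_l - \alpha/2}$ and hence (\ref{eq4}).

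For $0 < l < 1$, set $L = 1/l > 1$. With $R' = LR$ we have $lR' = R$, so $f(R)/f(LR) = f(lR')/f(R')$; combining with Lemma \ref{l1} gives
\[
\limsup_{R\to\infty}\frac{f(LR)}{f(R)} = \frac{1}{\liminf_{R'\to\infty} f(lR')/f(R')} = L^{k_l}, \qquad \liminf_{R\to\infty}\frac{f(LR)}{f(R)} = L^{K_l}.
\]
Applying the previous case with base $L > 1$, where $k_l$ now plays the role of the upper exponent and $K_l$ the lower one, yields $f(R)/R^{k_l + \alpha} \to 0$ and $f(R)/R^{K_l - \alpha} \to \infty$, completing the proof.

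No genuine difficulty arises. The only care required is converting the additive slack in the $\limsup$ and $\liminf$ identities into a multiplicative error with a slightly worse exponent, which is immediate from the continuity of $s \mapsto l^s$; everything else is a geometric-series iteration matched to dyadic-type shells $[l^k R_0, l^{k+1} R_0)$.
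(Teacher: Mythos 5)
Your proof is correct and follows essentially the same route as the paper: both arguments iterate the ratio bounds supplied by Lemma \ref{l1} along a geometric sequence of radii (your induction from a base radius $R_0$ is the paper's telescoping product with the counter $\phi(R)$), use monotonicity of $R\mapsto \V(B_R(p))$ to interpolate, and reduce the case $0<l<1$ to $l>1$ via the reciprocal relation between the $\liminf$ and $\limsup$ of the inverted ratio.
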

	\begin{proof}
		If $l>1$: By Lemma \ref{l1}, $\forall \epsilon>0$, we can choose $N$ large enough such that $R\geq N\Rightarrow$  
		$$l^{k_l}-\epsilon<\frac{\V(B_{lR}(p))}{\V(B_R(p))}< l^{K_l}+\epsilon. $$
		
		Define function $\phi(R):(N,\infty)\rightarrow \mathbb{N}_{+}$ by $ N\leq \frac{R}{l^{\phi(R)}}< lN$.  We have
		\begin{equation*}
			\V(B_R(p))=\frac{\V(B_R(p))}{\V(B_{\frac{R}{l}}(p))}\cdot \frac{\V(B_{\frac{R}{l}}(p))}{\V(B_{\frac{R}{l^2}}(p))}\cdots \frac{\V(B_{\frac{R}{l^{\phi(R)-1}}}(p))}{\V(B_{\frac{R}{l^{\phi(R)}}}(p))}\cdot \V(B_{\frac{R}{l^{\phi(R)}}}(p)).
		\end{equation*}
		So
		\begin{equation*}
			(l^{k_l}-\epsilon)^{\phi(R)}\V(B_N(p))	< \V(B_R(p))< (l^{K_l}+\epsilon)^{\phi(R)}\V(B_{lN}(p)).
		\end{equation*}
		Now $\forall\alpha>0$, we have
		\begin{align*}
			\frac{\V(B_R(p))}{R^{K_l+\alpha}}<&\frac{(l^{K_l}+\epsilon)^{\phi(R)}\V(B_{lN}(p))}{(Nl^{\phi(R)})^{K_l+\alpha}}\\
			=& \left( \frac{l^{K_l}+\epsilon}{l^{K_l+\alpha}}\right)^{\phi(R)}\frac{\V(B_{lN}(p))}{N^{K_l+\alpha}}.
		\end{align*} 
		Let $\epsilon$ be small enough such that $\frac{l^{K_l}+\epsilon}{l^{K_l+\alpha}}<1$, we obtain (\ref{eq3}).
		
		Similarly, we have
		\begin{align*}
			\frac{\V(B_R(p))}{R^{k_l-\alpha}}>&\frac{(l^{k_l}-\epsilon)^{\phi(R)}\V(B_{N}(p))}{(lNl^{\phi(R)})^{k_l-\alpha}}\\
			=& \left( \frac{l^{k_l}-\epsilon}{l^{k_l-\alpha}}\right)^{\phi(R)}\frac{\V(B_{N}(p))}{(lN)^{k_l-\alpha}},
		\end{align*} 
		Let $\epsilon$ be small enough such that $\frac{l^{k_l}-\epsilon}{l^{k_l-\alpha}}>1$, we obtain (\ref{eq4}).
		
		When $0<l<1$, we obtain from Lemma \ref{l1} that
		$$l^{-K_l}\leq  \liminf\limits_{R\to\infty} \frac{\V(B_{R}(p))}{\V(B_{lR}(p))}\leq  \limsup\limits_{R\to\infty} \frac{\V(B_{R}(p))}{\V(B_{lR}(p))}\leq l^{-k_l} .$$
		The rest of the derivation is similar.

	\end{proof}
	
	\begin{cor}\label{coro1}
		If $(M,p)$ has unique asymptotic cone with unique renormalized limit measure $(X,o,v)$, then there exists a unique $k\geq 0$ such that  
		
		(1) $v(B_l(o))=l^k,\forall l>0$.
		
		(2) the volume growth of $M$ satisfies (\ref{voleq1}).
	\end{cor}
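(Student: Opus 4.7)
The plan is to exploit uniqueness of the asymptotic cone together with its renormalized limit measure to extract a self-similarity identity for $v$, and then feed this into Proposition \ref{vol1}. Write $f(l):=v(B_l(o))$ for $l>0$, so that $f(1)=1$ and $f$ is monotone nondecreasing by ball inclusion. The key step is to establish the multiplicative identity $f(aR)=f(a)f(R)$ for all $a,R>0$. Fix $a>0$ and let $r_i\to\infty$ be any sequence realizing $(r_i^{-1}M,p)\to(X,o,v)$. The rescaled sequence $s_i:=ar_i$ also tends to infinity, so by the uniqueness hypothesis one has $(s_i^{-1}M,p)\to(X,o,v)$ as well. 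Applying the formula for $v(B_R(o))$ recorded at the start of Section \ref{volumegrowthandasymcone} to the sequence $s_i$ gives
\[
v(B_R(o))=\lim_{i\to\infty}\frac{\V(B_{s_iR}(p))}{\V(B_{s_i}(p))}=\lim_{i\to\infty}\frac{\V(B_{r_iaR}(p))}{\V(B_{r_ia}(p))},
\]
while using the sequence $r_i$ to compute $v(B_{aR}(o))$ and $v(B_a(o))$ separately and then taking the ratio yields the same limit. Hence $f(aR)/f(a)=f(R)$, as required.

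Once multiplicativity is in hand, the classical solution of the Cauchy functional equation applies: substituting $g(t):=\log f(e^t)$ converts $f$ into a monotone additive function on $\mathbb{R}$, which must be linear, $g(t)=kt$, so $f(l)=l^k$ for a unique $k\geq 0$ (Bishop volume comparison applied to the rescaled manifolds gives moreover $k\leq n$). This proves (1). For (2), uniqueness of $(X,o,v)$ forces $\Omega=\{(X,o,v)\}$ to be a singleton, so the $\min$ and $\max$ in the definitions of $k_l$ and $K_l$ are taken over a single element, making $k_l=K_l$; by (1) this common value equals $k$ for every admissible $l$. Inserting $k_l=K_l=k$ into Proposition \ref{vol1} then yields (\ref{voleq1}) directly.

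The only mildly technical point is the multiplicativity step: one must check that the uniqueness hypothesis on $(X,o,v)$ legitimately covers the shifted sequence $s_i=ar_i$, i.e.\ that multiplication by a fixed positive constant preserves the property of being a sequence tending to infinity whose renormalized pointed limit equals $(X,o,v)$. This is tautological from the statement, since the uniqueness hypothesis applies to \emph{every} sequence tending to infinity. The remaining steps, namely solving the Cauchy equation and invoking Proposition \ref{vol1}, are purely formal.
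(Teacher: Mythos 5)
Your proof is correct, but it establishes part (1) by a genuinely different route from the paper. You extract the multiplicative identity $v(B_{aR}(o))=v(B_a(o))\,v(B_R(o))$ directly from the uniqueness hypothesis (comparing the scaling sequences $r_i$ and $ar_i$) and then solve the resulting Cauchy functional equation under monotonicity to obtain $v(B_l(o))=l^k$; part (2) then follows from Proposition \ref{vol1} exactly as in the paper. The paper runs the logic in the opposite direction: since $\Omega$ is a singleton, $k_l=K_l$ for each fixed $l\neq 1$, so Proposition \ref{vol1} already gives volume growth of order exactly $k_l$; because the growth exponent of the single function $R\mapsto\V(B_R(p))$ is unique when it exists, $k_l$ cannot depend on $l$, and (1) drops out as a byproduct of (2). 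Your argument has the advantage of exhibiting the self-similarity of $v$ intrinsically, without detouring through the volume growth of $M$, and it would survive in settings where one only cares about the limit measure; the paper's argument is shorter and avoids the functional-equation discussion. One point you should not dismiss as tautological: the identity $v(B_R(o))=\lim_i\V(B_{s_iR}(p))/\V(B_{s_i}(p))$ for the shifted sequence $s_i=ar_i$ requires that this limit exist along the \emph{full} sequence, which a priori it need not; one needs the standard sub-subsequence argument (precompactness of the renormalized measures plus the uniqueness hypothesis forces every subsequential limit to be $(X,o,v)$), together with the fact that measured Gromov--Hausdorff convergence yields convergence of the normalized ball volumes. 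You should also record that $v(B_a(o))>0$ (by Bishop--Gromov on the rescaled manifolds), which justifies dividing by $f(a)$.
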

	\begin{proof}
		Fix an $l>0,l\neq 1$, then  $v(B_l(o))=l^{k_l}=l^{K_l}$. We obtain from Proposition \ref{vol1} that $\forall \alpha>0$,
		\begin{equation*}
			\lim\limits_{R\to\infty} \frac{\V(B_R(p))}{R^{k_l+\alpha}}=0,\,\lim\limits_{R\to\infty} \frac{\V(B_R(p))}{R^{k_l-\alpha}}=\infty.
		\end{equation*}
		But the $k_l$ satisfying the above volume growth  is unique, if exists. So $k:=k_l$ is independent of $l$. 
	\end{proof}
	
	Now we can prove the volume growth part of Theorem \ref{mainthm}:
	\begin{prop} If $M$ is $k$-Euclidean at infinity, then:
		
		(1) if $M$ has unique asymptotic cone $\mathbb{R}^k$, then the volume growth of $M$ satisfies (\ref{voleq1}).
		
		(2) if any asymptotic cone is an  $\mathbb{R}^k\times Z$ for some noncompact $Z$, then the volume growth of $M$ satisfies (\ref{voleq2}).
	\end{prop}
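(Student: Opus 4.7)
For part (1), the plan is to apply Proposition \ref{vol1} with $k_l=K_l=k$ for every $l\ne 1$. It suffices to show $v(B_l(o))=l^k$ uniformly over $(Z,o,v)\in\Omega$. Fix such a triple; by hypothesis $Z=\mathbb{R}^k$ and $o=0$. Since $\mathbb{R}^k$ is a metric cone at $o$, I would invoke the measure-cone property of Cheeger--Colding (valid for any renormalized limit measure on a Ricci-limit metric cone): $v(B_r(o))=c\cdot r^k$ for some $c>0$. The normalization $v(B_1(o))=1$ forces $c=1$, so $v(B_l(o))=l^k$ for every $(Z,o,v)\in\Omega$. Consequently $k_l=K_l=k$, and combining (\ref{eq3}) with (\ref{eq4}) of Proposition \ref{vol1} produces (\ref{voleq1}).

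For part (2), the strategy is to show $k_l\to k+1$ as $l\to\infty$ and then invoke (\ref{eq4}) of Proposition \ref{vol1}. Given $\alpha>0$, once I find some $l>1$ with $k_l\ge k+1-\alpha/2$, Proposition \ref{vol1} yields $\V(B_R(p))/R^{k+1-\alpha}\to\infty$, namely (\ref{voleq2}). The task therefore reduces to proving a uniform lower bound $v(B_l(o))\ge C l^{k+1}$ for $l$ large, valid for every $(Z,o,v)\in\Omega$.

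To derive the uniform lower bound, fix $(Z,o,v)\in\Omega$ and write $Z=\mathbb{R}^k\times Z'$ with $(Z',z')$ noncompact. Since smooth $n$-manifolds with $\Ric\ge 0$ are $\mathrm{RCD}(0,n)$ spaces and this class is stable under measured Gromov--Hausdorff limits, $(Z,v)$ is $\mathrm{RCD}(0,n)$. Applying Gigli's splitting theorem \cite{gigli} successively to the $k$ split lines yields a product decomposition $v=\nu_k\otimes\mu'$, where $\nu_k$ is a constant multiple of Lebesgue measure on $\mathbb{R}^k$ and $(Z',d',\mu')$ is a $\mathrm{CD}(0,n-k)$ space. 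Since $Z'$ is noncompact, a Yau-type linear growth estimate on $\mathrm{CD}(0,n-k)$ spaces gives $\mu'(B'_R(z'))\ge \tilde{c}\,R\,\mu'(B'_1(z'))$ for $R\ge R_0$, with $\tilde{c},R_0$ depending only on $n-k$. Combining the inclusions $B^k_{l/\sqrt{2}}(0)\times B'_{l/\sqrt{2}}(z')\subset B_l(o)\subset B^k_l(0)\times B'_l(z')$ with the normalization $v(B_1(o))=1$ (which implies $\nu_k(B^k_1(0))\mu'(B'_1(z'))\ge 1$), we arrive at $v(B_l(o))\ge C l^{k+1}$ for $l\ge l_0$, with $C,l_0$ independent of $(Z,o,v)$. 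Hence $k_l\ge k+1+\log_l C\to k+1$ as $l\to\infty$.

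The main technical obstacle, and the reason the proof must rely on Gigli's $\mathrm{RCD}$-splitting rather than on Cheeger--Colding's metric splitting, is securing a genuine $\mathrm{CD}(0,n-k)$ structure on the non-Euclidean factor $Z'$. This is precisely what allows Bishop--Gromov (and hence the Yau-type linear lower bound) to be applied on $(Z',\mu')$ with constants uniform across the entire family $\Omega$. Once this ingredient is in hand, the rest of part (2) is routine volume comparison, and part (1) is essentially bookkeeping with the cone-measure property.
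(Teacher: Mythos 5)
Your proposal is correct and follows essentially the same route as the paper: in (1) you reduce to $v(B_l(o))=l^k$ and apply Proposition \ref{vol1}, and in (2) you use Gigli's splitting theorem to obtain a $\mathrm{CD}(0,n-k)$ structure on the noncompact factor, run the Calabi--Yau linear volume growth argument there, and feed the resulting uniform bound $v(B_l(o))\geq Cl^{k+1}$ into (\ref{eq4}). The one caveat is your justification in (1): a general ``measure-cone property'' for arbitrary renormalized limit measures on Ricci-limit metric cones is not a standard fact; the correct reference is the measured splitting theorem (Cheeger--Colding \cite{CCI}, Prop.~1.35), which forces $v$ to be the normalized Lebesgue measure on $\mathbb{R}^k$ --- the conclusion is unaffected.
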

	\begin{proof}
		(1) If $M$ has unique asymptotic cone $\mathbb{R}^k$, then the renormalized limit measure $v$ on $\mathbb{R}^k$ is unique and must be the renormalized $k$-dimenisional Lebesgue measure $\frac{1}{\omega_k}\mathcal{L}^k$ on $\mathbb{R}^k$ by Cheeger-Colding \cite{CCI} Proposition 1.35, where $\omega_k=\mathcal{L}^k(B_1(0^k))$.
		So $v(B_l(0^k))=l^k$ for any $l>0$. 
		Inequality (\ref{voleq1}) now follows from Corollary (\ref{coro1}).
		
		(2)  Let $(T,o,v)=(\mathbb{R}^k\times Z, (0^k,z), \,\omega_k^{-1}\mathcal{L}^k\times \mathfrak{m}_{Z})$ be an asymptotic cone of $M$. From  the splitting theorem in RCD$(0,n)$ spaces (\cite{gigli}), we know that $(Z,z,\mathfrak{m}_{Z})$ is a CD$(0,n-k)$ space, hence the volume comparison applies. 
		
		Now $v(B_1(o))=1$ from the definition of renormalized limit measure.  So we have $$\mathfrak{m}_Z(B_1(z))=v(B_1(0^k)\times B_1(z))\geq v(B_1(o))=1.$$
		
		Let $\gamma$   be a ray in  $Z$ starting from $z$, we have
		\begin{align*}
			\mathfrak{m}_Z(B_{2R}(z))\geq&\mathfrak{
				m}_Z(B_1(z))\frac{\mathfrak{m}_Z(B_{R-1}(\gamma(R)))}{\mathfrak{m}_Z(B_{R+1})(\gamma(R))-\mathfrak{m}_Z(B_{R-1}(\gamma(R)))}\\
			\geq&   \frac{(R-1)^{n-k}}{(R+1)^{n-k}-(R-
				1)^{n-k}}\\
			\geq c_nR, \forall R\geq d_n,
		\end{align*}
		where $c_n,d_n>0$ are contants depends only on $n$. Thus 
		
		$$v(B_R(o))\geq \omega_k^{-1}\mathcal{L}^k(B_{\frac{R}{2}}(0))\times \mathfrak{m}(B_{\frac{R}{2}}(z)) \geq c'_n R^{k+1}  ,\forall R\geq d_n.$$
		
		Now write  $v(B_R(o))= R^{k(R)}$. Since 
		$$R^{k(R)}\geq c_n'R^{k+1}   ,\forall R\geq d_n,$$ 
		 for any $\beta>0$, there is a $C(n,\beta)$ such that 
		$$R\geq C(n,\beta)\Rightarrow k(R)> k+1-\beta.$$ 
		We conclude that $k_R\geq k+1-\beta, \forall R\geq C(n,\beta)$  
		(recall that 	$R^{k_R}=\min\{v(B_R(o) ) \mid  (Z,o,v)\in \Omega \}$). The volume growth estimate (\ref{voleq2}) now follows from Proposition \ref{vol1} (\ref{eq4}). 
	\end{proof}

	\bibliographystyle{plain} 
	\bibliography{refs}
\end{document}